\theoremstyle{plain}
\newtheorem{theorem}{Theorem}[section]
\newtheorem{proposition}[theorem]{Proposition}
\newtheorem{remark}[theorem]{Remark}
\newtheorem{lemma}[theorem]{Lemma}
\newtheorem{definition}[theorem]{Definition}
\newtheorem{corollary}[theorem]{Corollary}
\newcommand\Ee{{\mathcal E}}
\newcommand\sO{{\mathcal O}}
\newcommand\Oo{{\mathcal O}}
\newcommand\PP{{\mathbf P}}
  \def \tab#1{\kern #1 truein}
\begin{document}
\title{Castelnuovo-Mumford Regularity and Splitting Criteria for Logarithmic Bundles over Rational Normal Scroll Surfaces}
\author{R. Di Gennaro and F. Malaspina 
\vspace{6pt}\\
{\small   Universit\'a  degli Studi di Napoli Federico II}\\
 {\small\it Complesso Universitario Monte Sant'Angelo, Via Cinthia, 80126 Napoli, Italy}\\
{\small\it e-mail: digennar@unina.it}
\vspace{6pt}\\
 {\small  Politecnico di Torino}\\
{\small\it  Corso Duca degli Abruzzi 24, 10129 Torino, Italy}\\
{\small\it e-mail: francesco.malaspina@polito.it}}
\maketitle 
\def\thefootnote{}
\footnote{\noindent Mathematics Subject Classification 2010: 14F05, 14J60.
\\  Keywords: Castelnuovo-Mumford regularity, rational normal scroll, splitting criteria, logarithmic bundles.}

\begin{abstract}
We introduce and study a notion of Castelnuovo-Mumford regularity
suitable for rational normal scroll surfaces. In this setting we prove analogs of some classical properties.  We prove splitting criteria for coherent sheaves and a characterization of Ulrich bundles. Finally we study logarithmic bundles associated to arrangements of lines and rational curves.
\end{abstract}
\section{Introduction}\label{sect:intro}
In chapter~14 of~\cite{m} Mumford introduced the concept of regularity
for a coherent sheaf on a projective space $\PP^n$. It was soon clear
that it was a key notion and a fundamental tool in many areas of
algebraic geometry and commutative algebra.

From the algebraic geometry point of view, regularity measures the
complexity of a sheaf: the regularity of a coherent sheaf is an
integer that estimates the smallest twist for which the sheaf is
generated by its global sections. In Castelnuovo's much earlier
version, if $X$ is a closed subvariety of projective space and $H$ is
a general hyperplane, one uses linear systems (seen now as a precursor
of sheaf cohomology) to get information about $X$ from information
about the intersection of $X$ with $H$ plus other geometrical or
numerical assumptions on $X$.

From the computational and commutative algebra point of view, the
regularity is one of the most important invariants of a finitely
generated graded module over a polynomial ring. Roughly, it measures
the amount of computational resources that working with that module
requires. More precisely the regularity of a module bounds the largest
degree of the minimal generators and the degree of syzygies.

Extensions of this notion have been proposed over the years to handle
other ambient varieties instead of projective space: Grassmannians \cite{am},
quadrics \cite{bm3}, multiprojective spaces \cite{hw,bm2,cm2}, $n$-dimensional smooth
projective varieties with an $n$-block collection~\cite{cm2},
and abelian varieties \cite{PP}.
For a different approach to multigraded
regularity from a commutative algebra point of view, see \cite{bc,be}.

The aim of this paper is to introduce  a very simple and natural concept of regularity on a rational normal scroll surface.
  
The interesting fact is that on $\mathcal Q_2\cong \mathbb P^1\times \mathbb P^1$ our definition of regularity coincides
with
this definition of  regularity on $\mathbb P^1\times \mathbb P^1$ given in \cite{hw,cm2,bm3,bm2}
and we are able to prove that every regular coherent sheaf is globally generated, as done by Mumford
in the classical case $\mathbb {P}^n$.

  The second aim of this paper is to apply our notion of regularity in order to investigate under what circumstances a vector bundle can be decomposed into a direct sum of line bundles.
  In particular, in the second section splitting criteria of vector bundle on a rational normal scroll surface are given, generalizing some analogous result already known for $\PP^1\times \PP^1$ (\cite{bm2},\cite{bm3}). In \cite{FuMa} the authors give some splitting criteria for vector bundles of rank 2 in terms of Chern classes and vanishing of certain cohomology groups using Beilinson type spectral sequence.  They also remark that their results are the best possible without analysing the differentials in the spectral sequence. Our splitting criteria work for vector bundles of arbitrary rank thanks to the use of regularity and without the use of spectral sequences.
  In \cite{FM} Theorem B is given a complete classification of Ulrich bundles on rational normal scroll surfaces. Here we give an alternative and simpler proof without using derived category techniques.
  
  Finally, the last section focuses on the logarithmic bundle of divisors on a rational normal scroll. It fits in the  classical topic of the study of normal crossing divisors on a smooth complex variety $X$. When $D$ is a
normal crossing divisor, Deligne \cite{D}  constructed a mixed Hodge
structure on $U = X \setminus D$ using the logarithmic de Rham complex
$\Omega_X^\bullet(-\mbox{log }D)$. Following this idea, in \cite{S} 
Saito defined the sheaf $T_X(D)$ of derivations tangent to $D$ and (dually) the sheaf of logarithmic 
one-forms with pole along $D$, the logarithmic bundle ${\Omega^1}_X(\log D)$.\\
The module of tangent derivations is a sheaf of
$\mathcal{O}_X$--modules, such that if $f \in \mathcal{O}_{X,p}$ is a local
defining equation for $D$ at $p$, then 
\[
(T_X(-\log D))_p = \{ \theta \in T_X| \theta(f) \in \langle f \rangle \}.
\]
\noindent When $D$ is a normal-crossing divisor, $T_X(-\log D)$ is always locally free. 

%
The module of derivations tangent to $D$ is a reflexive sheaf. So, since a reflexive sheaf on a surface is always locally free, so it is interesting understand when it (or its dual) splits as $O_{\PP ^2}(a) \oplus O_{\PP ^2}(b)$, in which case the divisor is said to be a {\em free}. In general, free divisors are difficult to find. We find some classes of free divisors (precisely free arrangements of lines and rational curves) on a two dimensional rational normal scroll.\\

We thank M. Aprodu, A.P. Rao, G. Casnati and J. Pons-Llopis for helpful discussions and fundamental observations.

   \section{Regularity  on $S(a_0,a_1)$}

Throughout this article, our base field  is algebraically closed with characteristic 0.   Let $X=S(a_0, a_1)$ be a smooth rational normal scroll, the image of $\PP (\Ee)$ via the morphism defined by $\Oo_{\PP (\Ee)}(1)$, where $\Ee \cong  \Oo_{\PP^1}(a_0)\oplus\Oo_{\PP^1}(a_1)$ is a vector bundle of rank $n+1$ on $\PP^1$ with $0< a_0 \le a_1$. Letting $\pi : \PP (\Ee) \rightarrow \PP^1$ be the projection, we may denote by $H$ and $f$, the hyperplane section corresponding to $\Oo_{\PP(\Ee)}(1)$ and the fibre corresponding to $\pi^*\Oo_{\PP^1}(1)$, respectively. Then we have $Pic (X)\cong \mathbf Z\langle H,f\rangle$ and $\omega_X \cong \Oo_X(-2H+(c-2)f)$, where $c:=a_0+a_1$ is the degree of $X$. 

For the computational purpose, we use the following lemma.
\begin{lemma}[\cite{EH}]\label{lem}
For any $i=0,1,2$, we have
\begin{itemize}
\item [(i)] $H^i(X, \Oo_X(aH+bf)) \cong H^i(\PP^1, \mathrm{Sym}^a\Ee \otimes \Oo_{\PP^1}(b))$ if $a\ge 0$;
\item [(ii)]  $H^i(X, \Oo_X(-H+bf)) =0$ for any $b$;
\item [(iii)] $H^i(X, \Oo_X(aH+bf)) \cong H^{2-i}(\PP^1, \mathrm{Sym}^{-a-2}\Ee \otimes \Oo_{\PP^1}(c-b-2))$ if $a\le -2$.
\end{itemize}
\end{lemma}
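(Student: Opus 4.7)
The plan is to compute the cohomology on $X$ via the Leray spectral sequence for the projection $\pi : X = \PP(\Ee) \to \PP^1$. Since the fibres of $\pi$ are $\PP^1$ and the base $\PP^1$ has cohomological dimension one, the $E_2$-page is supported on $0 \le p,q \le 1$, so the spectral sequence degenerates at $E_2$ and yields
\[
H^0(X,\sF) = H^0(\PP^1,\pi_*\sF), \qquad H^2(X,\sF) = H^1(\PP^1,R^1\pi_*\sF),
\]
together with $0 \to H^1(\PP^1,\pi_*\sF) \to H^1(X,\sF) \to H^0(\PP^1,R^1\pi_*\sF) \to 0$. Applied to $\sF = \Oo_X(aH)\otimes\pi^*\Oo_{\PP^1}(b)$, the projection formula reduces everything to identifying $\pi_*\Oo_X(aH)$ and $R^1\pi_*\Oo_X(aH)$.

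For parts (i) and (ii) I would invoke the standard push-forward formulas on a $\PP^1$-bundle. When $a \ge 0$, one has $\pi_*\Oo_X(aH) = \mathrm{Sym}^a\Ee$ and $R^1\pi_*\Oo_X(aH) = 0$, the latter because $H^1(\PP^1,\Oo(a)) = 0$ for $a \ge -1$. When $a = -1$, both $\pi_*\Oo_X(-H)$ and $R^1\pi_*\Oo_X(-H)$ vanish, since $H^0(\PP^1,\Oo(-1)) = H^1(\PP^1,\Oo(-1)) = 0$ on every fibre. Substituting these vanishings into the Leray sequence, after twisting by $\Oo_{\PP^1}(b)$, gives (i) and (ii) directly; in (i) with $i = 2$ both sides vanish, since $\PP^1$ has cohomological dimension one.

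For part (iii), the quickest route is Serre duality on $X$. Using $\omega_X \cong \Oo_X(-2H+(c-2)f)$,
\[
H^i(X, \Oo_X(aH+bf)) \cong H^{2-i}(X, \Oo_X(-(a+2)H + (c-b-2)f))^\vee,
\]
and when $a \le -2$ the twist $-(a+2)$ is nonnegative, so part (i) applies to the right-hand side. This produces a space isomorphic, as a finite-dimensional vector space, to $H^{2-i}(\PP^1,\mathrm{Sym}^{-a-2}\Ee \otimes \Oo_{\PP^1}(c-b-2))$, which is the stated identification.

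The only point requiring care is the identification of $\pi_*\Oo_X(aH)$ and the vanishing of $R^1\pi_*\Oo_X(aH)$ in each range of $a$. These are standard facts for projective bundles and can be verified either via cohomology and base change, fibre by fibre, or directly from Grothendieck's explicit description of $\PP(\Ee)$. Once they are in hand, the rest is bookkeeping with the Leray spectral sequence and Serre duality.
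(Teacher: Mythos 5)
Your proof is correct. Note, however, that the paper offers no argument of its own here: the lemma is stated with a citation to Eisenbud--Harris, so there is nothing internal to compare against. Your route (Leray spectral sequence for $\pi:\PP(\Ee)\to\PP^1$, which degenerates at $E_2$ for dimension reasons, combined with the projection formula and the standard identities $\pi_*\Oo_X(aH)=\mathrm{Sym}^a\Ee$, $R^1\pi_*\Oo_X(aH)=0$ for $a\ge -1$, and the vanishing of both direct images for $a=-1$) is the standard derivation and handles (i) and (ii) completely, including the $i=2$ case of (i) where both sides vanish. For (iii), your reduction via Serre duality with $\omega_X\cong\Oo_X(-2H+(c-2)f)$ is the right move; the only caveat, which you already flag, is that Serre duality produces the \emph{dual} of $H^{2-i}(\PP^1,\mathrm{Sym}^{-a-2}\Ee\otimes\Oo_{\PP^1}(c-b-2))$, so the stated isomorphism is one of abstract finite-dimensional vector spaces rather than a canonical one --- which is all the paper ever uses, since the lemma is only invoked for dimension counts $h^i$. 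No gaps.
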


Recall the dual of the relative Euler exact sequence of $X$:

\begin{equation}\label{eq1b}
0\to \Oo_X(-H+cf)\to \Oo_X(a_0f)\oplus\Oo_X(a_1f) \to \Oo_X(H) \to 0.
\end{equation}
The pullback of the Euler sequence in $\PP^1$ is
\begin{equation}\label{eq2b}
0\to \Oo_X(-f)\to \Oo^2_X\to \Oo_X(f) \to 0,
\end{equation}
and we obtain

\begin{equation}\label{eq3b}
0\to \Oo_X(-H+(c-2)f)\to\Oo^2_X(-H+(c-1)f)\to \Oo_X(a_0f)\oplus\Oo_X(a_1f) \to \Oo_X(H) \to 0,
\end{equation}

 We give a definition of regularity on $X$:
\begin{definition}\label{d1}
A  coherent sheaf $F$ on $X$  is said to be  {\it $(p,p')$-regular} if, denoting $E=F(pH+p'f)$, for all $i>0$, $$h^2(E(-H+(c-2)f))=   h^1(E(-H+(c-1)f))= h^1(E(-f))=0$$
\\
We will say {\it regular} in order to $(0,0)$-regular.\\
We will say {\it $p$-regular} in order to $(p,0)$-regular.\\
We define the {\it regularity} of $F$, $Reg (F)$, as the least integer $p$ such that $F$ is $p$-regular. We set
$Reg (F)=-\infty$ if there is no such integer.
\end{definition}
\begin{remark} When $c=2$ we get $X=\PP^1\times\PP^1$ and this notion of regularity coincides with the notions of Castelnuovo-Mumford regularity given in \cite{bm2}, \cite{bm3} and \cite{hw} since $h^2(E(-H+(c-2)f))=h^2(E(-1,-1)),   h^1(E(-H+(c-1)f))=h^1(E(-1,0)), h^1(E(-f))=h^1(E(0,-1))$.
\end{remark}

\begin{lemma}\label{l1} If $F$ is a regular coherent sheaf on $X$,
then $h^1(F_{|f}((a-1)H+bf))=0$ for any $a\geq 0$ and for any integer $b$.

\end{lemma}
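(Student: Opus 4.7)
The plan is to recognize that the claim is really a Castelnuovo--Mumford $0$-regularity statement on $\PP^1$, and that the three vanishings defining regularity in Definition~\ref{d1} were designed precisely to yield the base case after restricting along a single fiber.

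First I would compute restrictions along a fiber: since $H\cdot f=1$ and $f^2=0$, one has $\Oo_X(H)_{|f}\cong\Oo_{\PP^1}(1)$ and $\Oo_X(f)_{|f}\cong\Oo_{\PP^1}$. Setting $G:=F_{|f}$, this gives
\[
h^1\bigl(F_{|f}((a-1)H+bf)\bigr)=h^1\bigl(\PP^1,G(a-1)\bigr),
\]
independently of $b$. Next I would reduce to the base case $a=0$ by the monotonicity of $h^1$ on $\PP^1$: for any coherent $G$ on $\PP^1$, tensoring $0\to\Oo_{\PP^1}(-1)\to\Oo_{\PP^1}\to\Oo_p\to 0$ with $G(k+1)$ and noting that both the resulting $\mathrm{Tor}_1$ term and the restriction $G(k+1)_{|p}$ are supported at a point (hence have no $H^1$) yields $h^1(G(k+1))\le h^1(G(k))$. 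Thus it suffices to prove $h^1(G(-1))=h^1(F(-H)_{|f})=0$.

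For the base case I would tensor the fiber sequence $0\to\Oo_X(-f)\to\Oo_X\to\Oo_f\to 0$ with $F(-H+(c-1)f)$. When $F$ is merely coherent, this produces a four-term exact sequence
\[
0\to K\to F(-H+(c-2)f)\to F(-H+(c-1)f)\to F(-H)_{|f}\to 0,
\]
where $K=\mathrm{Tor}_1^{\Oo_X}(F,\Oo_f)\otimes\Oo_X(-H+(c-1)f)$ is supported on the curve $f$, so $H^{\geq 2}(X,K)=0$. Splitting into $0\to K\to F(-H+(c-2)f)\to Q\to 0$ and $0\to Q\to F(-H+(c-1)f)\to F(-H)_{|f}\to 0$, the regularity vanishing $h^2(F(-H+(c-2)f))=0$ together with $H^3(K)=0$ forces $H^2(Q)=0$; then the regularity vanishing $h^1(F(-H+(c-1)f))=0$ together with $H^2(Q)=0$ forces $H^1(F(-H)_{|f})=0$, as desired.

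The only delicate point I anticipate is the presence of the $\mathrm{Tor}$ sheaf $K$, which appears precisely because I do not wish to assume $F$ is locally free along $f$. However, $K$ lives on a one-dimensional subscheme, so its higher cohomology automatically vanishes and the two short exact sequences can be chased without interference. The conceptual content of the proof is that the twists $-H+(c-2)f$ and $-H+(c-1)f$ appearing in Definition~\ref{d1} are tailored exactly so that their vanishings translate, through the fiber restriction sequence, into the sought-after Castelnuovo--Mumford $0$-regularity of $F_{|f}$ on a ruling of $X$.
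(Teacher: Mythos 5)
Your proof is correct and follows essentially the same route as the paper's: both restrict along a fiber twisted by $-H+(c-1)f$, use the two regularity vanishings $h^2(F(-H+(c-2)f))=h^1(F(-H+(c-1)f))=0$ to obtain $h^1(F_{|f}\otimes\Oo_{\PP^1}(-1))=0$, and then invoke Castelnuovo--Mumford regularity on $f\cong\PP^1$ to propagate the vanishing to all twists $\Oo_{\PP^1}(a-1)$ with $a\geq 0$. Your explicit treatment of the $\mathrm{Tor}_1$ term (which the paper suppresses by writing the restriction sequence as if it were short exact) is a minor but welcome refinement for a sheaf that is merely coherent.
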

\begin{proof} Let us consider this exact cohomology sequence:
$$\dots \rightarrow H^1(F(-H+(c-1)f)) \rightarrow H^1(F_{|f}(-H+(c-1)f)) \rightarrow H^{2}(F(-H+(c-2)f))\rightarrow \dots$$
Since the first and the
third groups vanish  by hypothesis, then also the middle group vanishes. $H^1(\Oo_{|f}(-H+(c-1)f))\cong H^1(\PP^1,\Oo_{\PP^1}(-1))$ so $H^1(F_{|f}((a-1)H+bf))=0$ for any $a\geq 0$ and for any integer $b$.\\

\end{proof}

\begin{lemma}\label{l2} If $F$ is a regular coherent sheaf on $X$,
then $h^2(F((a-1)H+(c-2+b)f))=0$ for any $a,b\geq 0$ and $h^1(F(tf)=0$ for any $t\geq -1$.
\end{lemma}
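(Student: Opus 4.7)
The plan is to prove each assertion by induction on the appropriate parameter, using restriction-to-curve exact sequences together with the fact that higher cohomology of coherent sheaves on one-dimensional schemes vanishes.

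For the vanishing $h^1(F(tf))=0$ with $t\ge -1$, I would induct on $t$, the base $t=-1$ being the regularity hypothesis. Tensoring the fiber sequence $0\to \Oo_X(-f)\to \Oo_X\to \Oo_f\to 0$ with $F(tf)$ and using $\Oo_X(f)_{|f}\cong\Oo_f$, the restriction identifies with $F_{|f}$, whose first cohomology vanishes by Lemma \ref{l1} applied with $a=1$. The associated cohomology sequence then gives $h^1(F(tf))\le h^1(F((t-1)f))$, which completes the induction.

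For $h^2(F((a-1)H+(c-2+b)f))=0$ with $a,b\ge 0$, I would use a double induction, the base case $a=b=0$ being the hypothesis. To induct on $a$ with $b=0$ fixed, I would choose a divisor $D\in|H|$ and tensor $0\to\Oo_X(-H)\to\Oo_X\to\Oo_D\to 0$ with $F((a-1)H+(c-2)f)$; since $\dim D=1$ the term $h^2(F_{|D}(\cdots))$ vanishes, whence $h^2(F((a-1)H+(c-2)f))\le h^2(F((a-2)H+(c-2)f))$, which telescopes down to the base. Having secured the case $b=0$ for all $a\ge 0$, I would then induct on $b$ for each fixed $a$ by tensoring the fiber sequence with $F((a-1)H+(c-2+b)f)$; again $h^2$ on $f\cong\PP^1$ vanishes, so $h^2(F((a-1)H+(c-2+b)f))\le h^2(F((a-1)H+(c-3+b)f))$, descending to the previously established case $b=0$.

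The main technical obstacle is that when $F$ is only coherent (not necessarily locally free), the tensored sequences may fail to be left-exact. I would address this by splitting such a sequence as $0\to K\to F(-D)\to F'\to 0$ and $0\to F'\to F\to F_{|D}\to 0$; since the kernel $K$ is supported on a subscheme of dimension at most one, $h^2(K)=0$, and the relevant inequalities between $h^1$'s and between $h^2$'s are preserved. This allows the inductions above to go through without further modification, and no machinery beyond Lemma \ref{l1} and dimensional vanishing is needed.
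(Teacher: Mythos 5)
Your argument is correct, and it splits into a part that coincides with the paper's and a part that takes a genuinely different route. For $h^1(F(tf))=0$, $t\ge -1$, you do exactly what the paper does: restrict to the fibre, invoke Lemma \ref{l1} for $h^1(F_{|f}(tf))=0$, and climb up from the base case $t=-1$. For the $h^2$ vanishing, however, the paper never restricts to a divisor: it runs the induction through the Euler-type sequences (\ref{eq1b}) and (\ref{eq2b}), whose terms are all locally free (in particular the surjections are onto line bundles, hence locally split), so tensoring with an arbitrary coherent $F$ stays exact and the needed surjectivity on $H^2$ comes for free from $H^3=0$ on a surface; sequence (\ref{eq2b}) raises the $f$-twist and (\ref{eq1b}) raises the $H$-twist, exactly parallel to your two inductions. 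You instead use the restriction sequences $0\to\Oo_X(-D)\to\Oo_X\to\Oo_D\to 0$ for $D\in|H|$ and $D=f$, with the dimensional vanishing $h^2(F_{|D})=0$ on the curve $D$ as the engine. Both work; the trade-off is that your route forces you to confront the failure of left-exactness of $-\otimes F$ (which you handle correctly: the $\mathcal{T}or_1$ kernel is supported on $D$, so its $H^2$ and $H^3$ vanish and the surjections on $H^1$ and $H^2$ survive), whereas the paper's choice of sequences sidesteps that issue entirely for the $h^2$ statement --- though the paper itself silently incurs the same issue in the $h^1$ step, which you are the one to address explicitly. Your version is slightly more self-contained (it needs only Grothendieck vanishing on curves and Lemma \ref{l1}), at the cost of the extra bookkeeping with the kernel.
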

\begin{proof}From (\ref{eq2b}) we get $h^2(F(-H+(c-2+t)f))=0$ for any $t\geq 0$. From (\ref{eq1b}) tensored by $F((c-2)f)$ we get $h^2(F((c-2+t)f))=0$ and again by (\ref{eq2b}) we obtain $h^2(F((c-2+t)f))=0$ for $t\geq 0$. In the same way $h^2(F((a-1)H+(c-2+b)f))=0$ for any $a\geq 0$ and for any $b\geq 0$.\\
From $$0\to F(-f)\to F\to F_{|f}\to 0,$$ we deduce that
$h^1(F(tf))=0$ for any $t\geq -1$.\\

\end{proof}

\begin{lemma}\label{l3} If $F$ is a regular coherent sheaf on $X$,\begin{enumerate}
    \item [i)] $H^1(F_{|H}((c-1+b)f))=0$ for any $b\geq 0$.\\
\item [ii)] $H^1(F_{|H}((a+1)H+(b-1)f)=0$ for any $a,b\geq 0$.
\end{enumerate}

\end{lemma}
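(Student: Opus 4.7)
The plan is to treat both parts using the standard restriction exact sequence
$$0 \to \Oo_X(-H) \to \Oo_X \to \Oo_H \to 0,$$
together with the observation that, since $H \cdot f = 1$, the divisor $H$ is a section of $\pi : X \to \PP^1$ and hence $H \cong \PP^1$.

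For part (i), I would tensor the restriction sequence with $F((c-1+b)f)$ and read off the relevant piece of the associated long exact sequence:
$$H^1(F((c-1+b)f)) \to H^1(F_{|H}((c-1+b)f)) \to H^2(F(-H+(c-1+b)f)).$$
The first group vanishes by Lemma \ref{l2}, since $c-1+b \geq -1$ for $b \geq 0$ and $c \geq 2$. The third vanishes by the same lemma, applied with $a = 0$ and $b' = b+1 \geq 0$, which yields $h^2(F(-H+(c-2+b')f)) = 0$. Hence the middle group vanishes, proving (i).

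For part (ii), I would reduce to part (i) by comparing degrees on $H \cong \PP^1$. On $H$, the line bundles $\Oo_X(H)|_H$ and $\Oo_X(f)|_H$ have degrees $H^2 = c$ and $H \cdot f = 1$ respectively, so $\Oo_X((a+1)H+(b-1)f)|_H$ has degree $(a+1)c + b - 1$, while $\Oo_X((c-1+b')f)|_H$ has degree $c-1+b'$. Choosing $b' := ac + b$, which is non-negative when $a, b \geq 0$, makes the two degrees coincide. Since any two line bundles of the same degree on $\PP^1$ are isomorphic, it follows that $F_{|H}((a+1)H+(b-1)f) \cong F_{|H}((c-1+b')f)$ as coherent sheaves on $H$, and therefore their $H^1$'s agree. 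Part (i) applied with parameter $b'$ then gives the required vanishing.

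The only real obstacle is the index bookkeeping needed to line up the hypotheses of Lemma \ref{l2} with the required twists (especially remembering that $c - 2 + b'$ with $b' = b+1$ is what matches $-H+(c-1+b)f$); once that is done, the argument is a clean dévissage, and part (ii) is essentially cosmetic once part (i) and the isomorphism $H \cong \PP^1$ are in hand.
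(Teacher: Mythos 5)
Your proof is correct and follows essentially the same route as the paper: the restriction sequence to $H$, the long exact cohomology sequence with the outer terms killed by Lemma \ref{l2} and regularity, and the identification $H\cong\PP^1$ together with a degree count ($H^2=c$, $H\cdot f=1$) for part (ii). The only (harmless) differences are that you prove (i) for all $b\ge 0$ directly from Lemma \ref{l2} and reduce (ii) to (i) by matching degrees of line bundles on $\PP^1$, whereas the paper treats $b=0$ first and then invokes monotonicity of $h^1$ under positive twists on $\PP^1$.
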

\begin{proof}Let us consider this exact cohomology sequence:
$$\dots \rightarrow H^1(F((c-1)f)) \rightarrow H^1(F_{|H}((c-1)f)) \rightarrow H^{2}(F(-H+(c-2)f))\rightarrow \dots$$
Since $H^1(F((c-1)f))=H^{2}(F(-H+(c-2)f)=0$ we get $h^1(F_{|H}((c-1)f))=h^1(\PP^1, F_{H}(c-1))=0$ and also $h^1(\PP^1, F_{|H}(c-1+t))=h^1(F_{|H}((c-1+t)f))=0$ for $t\geq 0$. So $(i)$ is proved.\\
$H^1(\Oo_{|H}((a+1)H+(b-1)f)\cong H^1(\PP^1,\Oo_{\PP^1}((a+1)c+b-1))$ and when $a,b\geq 0$ we get $(a+1)c+b-1\geq c-1$ so $H^1(F_{|H}((a+1)H+(b-1)f)=0$ for any $a,b\geq 0$. So also $(ii)$ is proved.

\end{proof}

\begin{proposition}\label{p1} Let $F$ be a regular coherent sheaf on $X$ then
  \begin{enumerate}
  \item $F(pH+p'f)$ is regular for $p,p'\geq 0$.\\
  \item $H^0(F(f))$ is spanned by $$H^0(F)\otimes H^0(\sO(f));$$ and $H^0(F(H))$ it is
spanned by
  $$H^0(F(a_0f))\oplus H^0(F(a_1f)).$$
  \end{enumerate}
  \end{proposition}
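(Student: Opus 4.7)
The plan is to unfold the definition of regularity of $F(pH+p'f)$ into three cohomology vanishings and then dispatch each one by means of the lemmas already established. Concretely, $F(pH+p'f)$ is regular if and only if, for the given $p,p'\ge 0$,
\begin{itemize}
\item[(a)] $h^2(F((p-1)H+(p'+c-2)f))=0$,
\item[(b)] $h^1(F((p-1)H+(p'+c-1)f))=0$,
\item[(c)] $h^1(F(pH+(p'-1)f))=0$.
\end{itemize}
Condition (a) is immediate from Lemma~\ref{l2}.

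I would handle (c) by induction on $p$. The base case $p=0$ reduces to $h^1(F(tf))=0$ for $t\ge -1$, which is again Lemma~\ref{l2}. For the inductive step, tensor the restriction sequence $0\to\Oo_X(-H)\to\Oo_X\to\Oo_H\to 0$ with $F(pH+(p'-1)f)$ and isolate the piece
\[
H^1(F((p-1)H+(p'-1)f))\to H^1(F(pH+(p'-1)f))\to H^1(F_{|H}(pH+(p'-1)f))
\]
of the long exact sequence. The left group vanishes by the inductive hypothesis, and the right group by Lemma~\ref{l3}(ii) applied with $a=p-1$ and $b=p'$.

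For (b), observe first that when $p\ge 1$ the claim is already subsumed by (c) with the second index shifted by $c-1\ge 1$. Only the case $p=0$ remains, namely $h^1(F(-H+(p'+c-1)f))=0$ for $p'\ge 0$. Here I induct on $p'$: the base $p'=0$ is precisely the regularity hypothesis on $F$, and for the inductive step I tensor the pullback Euler sequence (\ref{eq2b}) with $F(-H+(p'+c-2)f)$ to obtain
\[
H^1(F(-H+(p'+c-2)f))^{\oplus 2}\to H^1(F(-H+(p'+c-1)f))\to H^2(F(-H+(p'+c-3)f)),
\]
where the leftmost group vanishes by the inductive hypothesis on $p'$ and the rightmost by Lemma~\ref{l2}. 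This closes (1).

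For (2), tensoring (\ref{eq2b}) with $F$ produces
\[
H^0(F)^{\oplus 2}\to H^0(F(f))\to H^1(F(-f))=0,
\]
and since the surjection $\Oo_X^{2}\to\Oo_X(f)$ in (\ref{eq2b}) is induced by a basis of $H^0(\Oo_X(f))\cong\mathbb{C}^2$, this map factors through $H^0(F)\otimes H^0(\Oo_X(f))$ and gives the first surjectivity. For the second, tensoring (\ref{eq1b}) with $F$ yields
\[
H^0(F(a_0f))\oplus H^0(F(a_1f))\to H^0(F(H))\to H^1(F(-H+cf)),
\]
and by (1) the sheaf $F(f)$ is regular, so the middle vanishing condition in the regularity of $F(f)$, namely $h^1(F(f)(-H+(c-1)f))=h^1(F(-H+cf))=0$, supplies what is needed. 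The only real obstacle is organizing the nested inductions of (1) so they do not circle back on themselves; once that bookkeeping is straight, the rest is simply reading off long exact sequences.
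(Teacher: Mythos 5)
Your proof is correct and follows essentially the same route as the paper: both reduce everything to the vanishings of Lemmas \ref{l2} and \ref{l3} via the restriction and Euler sequences, with your explicit double induction on $(p,p')$ replacing the paper's one-step verification that $F(H)$ and $F(f)$ are regular (which it then implicitly iterates). The only cosmetic divergence is in the second surjection of part (2), where you use the three-term sequence (\ref{eq1b}) together with $h^1(F(-H+cf))=0$ supplied by part (1), while the paper uses the four-term sequence (\ref{eq3b}) and the two vanishings coming directly from the regularity of $F$.
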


  \begin{proof} $(1)$ Let $F$ be a regular coherent sheaf, we want show that also $F(H)$ is regular. $h^2(F((c-2)f))=0$ by Lemma \ref{l2}. In order to show that  $h^1(F((c-1)f))=0$ let us
consider the exact
  cohomology sequence:
$$\dots \rightarrow H^1(F(-H+(c-1)f)) \rightarrow H^1(F((c-1)f)) \rightarrow H^{1}(F_{|H}((c-1)f))\rightarrow \dots$$
We notice that the first group vanishes by hypothesis and the third group vanishes by $(i)$ of Lemma \ref{l3}.
Then also the middle group vanishes.\\
It remains to show that $h^1(F(H-f))=0$. By the exact
   cohomology sequence:
$$\dots \rightarrow H^1(F(-f)) \rightarrow H^1(F(H-f)) \rightarrow H^{1}(F_{|H}(H-f))\rightarrow \dots$$
since the first group vanishes by hypothesis and the third group vanishes by $(ii)$ of Lemma \ref{l3} we obtain that also the middle group vanishes.\\

Let $F$ be a regular coherent sheaf, we want show that also $F(f)$ is regular. $h^2(F(-H+(c-1)f))=h^1(F)=0$ by Lemma \ref{l2}. In order to show that  $h^1(F(-H+cf))=0$ let us
consider the exact
   cohomology sequence:
$$\dots \rightarrow H^1(F(-H+(c-1)f)) \rightarrow H^1(F(-H+cf)) \rightarrow H^{1}(F_{|f}(-H+cf))\rightarrow \dots$$
We notice that the first group vanishes by hypothesis and the third group vanishes by Lemma \ref{l1}.
Then also the middle group vanishes.\\

$(2)$ Let us consider (\ref{eq2b}) tensored by $F$:

$$0\to F(-f)\to F^2\to F(f) \to 0.$$
Since $H^1(F(-f))=0$, we obtain
$$ H^0(F)\otimes H^0(\sO_X(f))\to H^0(F(f)) \to 0.$$
Now let us consider (\ref{eq3b}) tensored by $F$:

$$0\to F(-H+(c-2)f)\to F^2(-H+(c-1)f)\to F(a_0f)\oplus F(a_1f) \to F(H) \to 0.$$
Since $H^2(F(-H+(c-2)f))= H^1(F(-H+(c-1)f)=0$, we obtain
$$ H^0(F(a_0f))\oplus H^0(F(a_1f)) \to H^0(F(H)) \to 0.$$
\end{proof}

  \begin{remark}\label{gg} If  $F$ is a regular coherent sheaf on $X$ then it is globally generated.\\
  In fact by the above proposition we have the following surjections:
  $$H^0(F)^q\to
H^0(F(a_0f))\oplus H^0(F(a_1f)) \to H^0(F(H)),$$ for a suitable positive integer $q$. So also
the map
$$H^0(F)^q\rightarrow H^0(F(H))$$ is a
  surjection.\\
  Moreover we can consider a sufficiently large twist $l$ such that $F(lH)$ is globally generated. For a suitable positive integer $q'$ the
  commutativity of the diagram
$$\begin{matrix}
H^0(F)^{q'}\otimes\sO_X
&\to&H^0(F(lH))\otimes\sO_X\\
\downarrow&&\downarrow\\
H^0(F)^q\otimes\sO(lH)&\to&F(lH)
\end{matrix}$$
and the surjectivity of the top horizontal map and the two vertical maps yield the surjectivity of $H^0(F)\otimes\sO(lH)\to F(lH)$, which implies that $F$ is generated by its
sections.
  \end{remark}
  \begin{remark}\label{lb}
  $\sO_X(aH-bf) $  is regular if and only if $a\geq 0$ and $b\leq aa_0$.
   \end{remark}
  \begin{remark}\label{b1} 
  In particular $\sO_X, \sO_X(f), \sO_X(H-f) $ are regular but not $-1$-regular so $Reg(\sO_X)=Reg(\sO_X(f))=Reg(\sO_X(H-f))=0$.\\
   \end{remark}
 
 \section{Splitting criteria and Ulrich bundles}
 It is possible to use this notion of regularity in order to prove splitting criteria for vector bundles:
  \begin{theorem}\label{t1}Let $E$ be a rank $r$ vector bundle on $X$.\\ Then following conditions are equivalent:
  \begin{enumerate}
  \item for any integer $t$,  $$h^1(E(tH+(c-1)f))= h^1(E(tH-f))=0,$$ 
  \item There are $r$ integer $t_1, \dots, t_r$ such that $E\cong \bigoplus_{i=1}^r \sO_X(t_iH)$.
  \end{enumerate}
  \end{theorem}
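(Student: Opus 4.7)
The plan is as follows. The implication $(2)\Rightarrow(1)$ is a direct verification: for each line bundle $\sO_X(sH)$, use Lemma \ref{lem} to check $h^1(\sO_X(sH+(c-1)f))=0$ (via (i) when $s\geq 0$, (ii) when $s=-1$, and (iii) when $s\leq -2$) and analogously $h^1(\sO_X(sH-f))=0$. For the converse, I induct on the rank $r$. The base case $r=1$: write $E=\sO_X(aH+bf)$ and evaluate the hypothesis at $s=0$ for the second vanishing and $s=-2$ for the first; Lemma \ref{lem}(i) and (iii) force $b\geq 0$ and $b\leq 0$ respectively, so $E=\sO_X(aH)$.

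For the inductive step, the goal is to split off a line-bundle direct summand. By Serre duality $h^2(E((t-1)H+(c-2)f))=h^0(E^*(-(t+1)H))$, so together with the hypothesis $E(tH)$ is regular exactly when $h^0(E^*(-(t+1)H))=0$. Set $t_0=\max\{t\in\Z:\pi_*E^*(-tH)\neq 0\}$, which is finite; then $E(t_0H)$ is regular and $h^0(E^*(-t_0H))\neq 0$. The Leray spectral sequence rewrites the hypothesis as: $A_t:=\pi_*E(tH)=\bigoplus\sO_{\PP^1}(\alpha_{i,t})$ with all $\alpha_{i,t}\geq 0$, and $B_t:=R^1\pi_*E(tH)=\bigoplus\sO_{\PP^1}(\beta_{j,t})$ torsion-free with all $\beta_{j,t}\leq -c$. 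Grothendieck duality along $\pi$ (using $\omega_\pi=\sO_X(-2H+cf)$) then yields $\pi_*E^*(-t_0H)\cong B_{t_0-2}^\vee\otimes\sO_{\PP^1}(-c)$, whose summands have non-negative degrees. Hence $t'_0:=\min\{t':h^0(E^*(-t_0H+t'f))\neq 0\}\leq 0$, and any section in this minimal space yields a non-zero morphism $\phi:E\to L:=\sO_X(-t_0H+t'_0f)$.

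The crux, and main obstacle, is to show $\phi$ is surjective as a sheaf morphism and that the sequence $0\to K\to E\to L\to 0$ splits. If the image of $\phi$ were contained in $L(-D)$ for an effective divisor $D=\alpha H+\beta f$, then $\alpha\geq 1$ contradicts the maximality of $t_0$ (since $\pi_*E^*(-(t_0+\alpha)H)=0$) and $\alpha=0,\beta\geq 1$ contradicts the minimality of $t'_0$. A zero-dimensional cokernel is ruled out as follows: by Proposition \ref{p1}(1) the sheaf $E(t_0H-t'_0f)$ is regular (since $-t'_0\geq 0$), hence globally generated; if $\phi_p$ were not surjective at some $p\in X$, then for every global section $\tilde s$ the value $\phi(\tilde s)\in H^0(\sO_X)=k$ would lie in $\mathfrak m_p$ and hence vanish, so global generation would force $\phi\equiv 0$, a contradiction. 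For the splitting, the bounds on $A_{t_0},B_{t_0}$ combined with $-t'_0\geq 0$ give $H^1(E(t_0H-t'_0f))=0$, and surjectivity of $\phi$ together with global generation supply a section of $E(t_0H-t'_0f)$ mapping to $1\in H^0(\sO_X)$; the long exact sequence of $0\to K(t_0H-t'_0f)\to E(t_0H-t'_0f)\to\sO_X\to 0$ then yields $H^1(K(t_0H-t'_0f))=\mathrm{Ext}^1(L,K)=0$, so the sequence splits. Direct summands inherit the hypothesis (1), so the base case forces $t'_0=0$, and the rank $r-1$ bundle $K$ satisfies (1), completing the induction.
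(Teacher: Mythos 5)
Your argument is correct and its engine is the same as the paper's: locate the twist $t_0$ at which $E(t_0H)$ is regular (hence globally generated, by Remark \ref{gg}) while $h^0(E^\vee(-t_0H))\neq 0$, use a nonzero map $E(t_0H)\to\sO_X$ together with global generation to produce a section mapping to $1\in H^0(\sO_X)$ and hence split off a line bundle, then induct on the rank. The difference is in how you reach the nonvanishing $h^0(E^\vee(-t_0H))\neq 0$: the paper simply takes the minimal $t$ with $E(tH)$ regular, observes that under hypothesis (1) the only way $E((t-1)H)$ can fail regularity is $h^2(E((t-2)H+(c-2)f))\neq 0$, and applies Serre duality on $X$; you instead pass through $\pi_*$, the Leray spectral sequence and relative duality along $\pi$ to control the splitting types of $\pi_*E(tH)$ and $R^1\pi_*E(tH)$. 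Your route costs more machinery but buys a sharper picture of what (1) says fiberwise; it also makes the auxiliary twist $t_0'$ unavoidable, whereas the paper never needs it. Two small caveats: (a) your intermediate claim $H^1(E(t_0H-t_0'f))=0$ does not follow from the stated bounds alone when $-t_0'\geq c$ (the degrees of $B_{t_0}(-t_0')$ need not stay negative), but this step is superfluous --- the section of $E(t_0H-t_0'f)$ mapping to $1$ is already a splitting of $\phi$, so no $\mathrm{Ext}^1$ vanishing is required; (b) the whole $t_0'$ detour could be skipped, since you have already shown $h^0(E^\vee(-t_0H))\neq 0$ directly, and the global generation argument applied to $E(t_0H)$ itself splits off $\sO_X$ exactly as in the paper.
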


  \begin{proof} $(1)\Rightarrow (2)$. Let assume that $t$ is an integer such that $E(tH)$ is regular but
  $E((t-1)H)$ not.\\
By the definition of regularity and $(1)$ we can say that $E((t-1)H)$ is not regular if and only if
$H^{2}(E((t-2)H+(c-2)f))\not=0$. By Serre duality we have that $H^0(E^{\vee}(-tH))\not=0$.\\
 Now since $E(tH)$ is globally generated by Remark \ref{gg} and $H^0(E^{\vee}(-tH))\not=0$ we can conclude
 that $\sO_X$ is a direct summand of $E(tH)$.
      By iterating these arguments we get $(2)$.\\
      $(2)\Rightarrow (1)$. $h^1(\sO_X(tH+(c-1)f))= h^1(\sO_X(tH-f))=0,$ for any integer $t$, so if $E\cong \bigoplus_{i=1}^r \sO(t_iH)$ then it satisfies all the conditions in $(1)$.\\
      \end{proof}
      \begin{remark}If $c=2$ the above theorem is the Horrocks criterion on $\PP^1\times\PP^1$ (see \cite{bm2}, \cite{bm3}).\\
\end{remark}
\begin{corollary}\label{cor1} Let $E$ be a vector bundle on $X$ with $Reg(E)=0$ and $H^2(E(-2H+(c-2)f))\not=0$ or $H^1(E(-2H+(c-1)f))=H^1(E(-H-f))=0$, then $\sO_X$ is direct summand of $E$.
\end{corollary}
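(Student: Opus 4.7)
The plan is to mirror the argument used in Theorem \ref{t1} and reduce everything to one Serre-duality computation plus the global generation supplied by Remark \ref{gg}. The key observation is that the two alternative hypotheses are really the same hypothesis in disguise: by the definition of regularity, $\mathrm{Reg}(E)=0$ means $E$ is regular but $E(-H)$ is not, so at least one of
\[
h^2(E(-2H+(c-2)f)),\ h^1(E(-2H+(c-1)f)),\ h^1(E(-H-f))
\]
is nonzero. If the second alternative of the statement holds (the two $h^1$'s vanish), then the $h^2$ must be the one that is nonzero; so in both cases one may assume $H^2(E(-2H+(c-2)f))\neq 0$.

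Next I would apply Serre duality, using the canonical bundle $\omega_X\cong \sO_X(-2H+(c-2)f)$ recorded in the preliminaries. Because the twist is exactly by $\omega_X$, the duality collapses to
\[
H^2\bigl(E(-2H+(c-2)f)\bigr)^{\vee}\cong H^0\bigl(E^{\vee}\otimes \omega_X\otimes \sO_X(2H-(c-2)f)\bigr)=H^0(E^{\vee}).
\]
Thus there exists a nonzero morphism $\sigma\colon E\to \sO_X$.

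Finally, since $\mathrm{Reg}(E)=0$, Remark \ref{gg} gives that $E$ is globally generated, so the composition
\[
H^0(E)\otimes \sO_X \twoheadrightarrow E\xrightarrow{\ \sigma\ }\sO_X
\]
is a nonzero morphism of $\sO_X$-modules, hence corresponds to a nonzero linear functional on $H^0(E)$. Equivalently, some global section $s\in H^0(E)$ satisfies $\sigma\circ s\neq 0$ in $H^0(\sO_X)=k$. After rescaling, $\sigma\circ s=\mathrm{id}_{\sO_X}$, so $s\colon \sO_X\hookrightarrow E$ splits off a trivial summand and $E\cong \sO_X\oplus (E/\sO_X)$.

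There is no serious obstacle here; the only delicate point is parsing the statement so as to recognise that the clause containing ``or'' is equivalent in both branches to $H^2(E(-2H+(c-2)f))\neq 0$ — once that is settled, the proof is an immediate repetition of the key step in the proof of Theorem \ref{t1}.
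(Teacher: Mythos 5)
Your proposal is correct and follows essentially the same route as the paper: both reduce the ``or'' hypothesis, via the non-regularity of $E(-H)$, to the single condition $H^2(E(-2H+(c-2)f))\neq 0$, and then invoke Serre duality ($H^0(E^\vee)\neq 0$) together with the global generation of $E$ from Remark \ref{gg}, exactly as in the key step of the proof of Theorem \ref{t1}. Your write-up merely spells out the splitting-off of the trivial summand more explicitly than the paper, which simply cites the proof of Theorem \ref{t1}.
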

\begin{proof}
    Since $E(-H)$ is not regular, if $H^1(E(-2H+(c-1)f))=H^1(E(-H-f))=0$, we have $H^2(E(-2H+(c-2)f))\not=0$ and $E\cong \sO_X$ by the proof of the above Theorem.
\end{proof}

 \begin{theorem}\label{t2}Let $E$ be a  vector bundle on $X$.\\ Then following conditions are equivalent:
  \begin{enumerate}
  \item  for any integer $t$,  $$H^1(E(tH))=H^1(E(tH+(c-2)f))=H^1(E(tH+(a_0-1)f))=H^1(E(tH+(a_1-1)f))=0$$
  
  \item $E$ is a direct sum of  line bundles $\sO_X$, $\sO(f)$ and $\sO(H-f)$ with some twist $tH$.
  \end{enumerate}
  \end{theorem}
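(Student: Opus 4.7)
For the direction $(2)\Rightarrow(1)$, I plan a direct cohomological check. Each allowed summand $\sL(tH)$ with $\sL\in\{\sO_X,\sO_X(f),\sO_X(H-f)\}$ can be written as $\sO_X(aH+bf)$ with $b\in\{-1,0,1\}$; using Lemma~\ref{lem}(i) when the $H$-coefficient is $\ge 0$, (ii) when it equals $-1$, and (iii) when it is $\le -2$, each of the four vanishings in~(1) reduces to the vanishing of a $\PP^1$-cohomology group of a direct sum of line bundles whose degrees are all $\ge -1$. The four values $k\in\{0,c-2,a_0-1,a_1-1\}$ are chosen precisely so that this degree estimate is uniform in $b\in\{-1,0,1\}$.

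I would prove $(1)\Rightarrow(2)$ by induction on $r=\mathrm{rank}\,E$. For $r=1$, write $E=\sO_X(aH+bf)$ and read the preceding computation backwards: evaluating the first condition of~(1) at the $H$-twist making the $H$-coefficient zero (via Lemma~\ref{lem}(i)) forces $b\ge -1$, while evaluating the $k=c-2$ condition at the twist making the $H$-coefficient equal $-2$ (via Lemma~\ref{lem}(iii)) forces $b\le 1$. Hence $b\in\{-1,0,1\}$ and $E$ is one of $\sO_X(aH)$, $\sO_X(aH+f)$, or $\sO_X((a+1)H-f)$.

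For $r\ge 2$, the plan is to split off a direct summand of one of the three allowed forms and apply the inductive hypothesis to the quotient. I would pick $t$ minimal such that $E(tH)$ is regular in the sense of Definition~\ref{d1}; by Remark~\ref{gg} this gives global generation, and $E((t-1)H)$ must fail one of the three vanishings in Definition~\ref{d1}. Unlike in Theorem~\ref{t1}, hypothesis~(1) does not directly cover the $H^1$-vanishings appearing in Definition~\ref{d1}; however, tensoring $E$ with the resolution obtained from (\ref{eq1b})$\otimes\sO_X(-f)$, namely $0\to\sO_X(-H+(c-1)f)\to\sO_X((a_0-1)f)\oplus\sO_X((a_1-1)f)\to\sO_X(H-f)\to 0$, and combining with (\ref{eq2b}) and (\ref{eq3b}), produces long exact sequences whose middle $H^1$'s vanish by~(1). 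These sequences allow me to bootstrap the needed $H^1$-vanishings and conclude that the failure of regularity for $E((t-1)H)$ is localized in a non-vanishing $H^2$-group. Serre duality on $X$ (using $\omega_X=\sO_X(-2H+(c-2)f)$) then converts this into a non-zero $\mathrm{Hom}(E(tH),\sL)$ for one of $\sL\in\{\sO_X,\sO_X(f),\sO_X(H-f)\}$; combining with a section of $E(tH)$ from global generation exactly as in the proof of Theorem~\ref{t1} splits $\sL(tH)$ off $E$ as a direct summand. Since $\sL(tH)$ satisfies~(1) by the rank-one case, so does the quotient, and the induction closes.

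The main obstacle lies precisely in the bootstrap step: in Theorem~\ref{t1} the hypothesis was arranged so that failure of regularity at twist $t-1$ is immediately confined to the $H^2$ slot, whereas here one must match the three possible summand types $\sL$ with three different propagation routes through the sequences (\ref{eq1b})--(\ref{eq3b}), and verify that at least one of them really produces the desired non-vanishing $H^2$ and the resulting splitting. Keeping track of the parallel between the four chosen $f$-twists in~(1) and the three candidate summands is the delicate book-keeping on which the argument hinges.
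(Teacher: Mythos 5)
Your direction $(2)\Rightarrow(1)$ and the rank-one analysis are fine, but the inductive step contains a genuine gap, in fact two related ones. First, the claim that hypothesis~(1) lets you ``bootstrap'' so that the failure of regularity of $E((t-1)H)$ is \emph{localized in a non-vanishing $H^2$-group} is false. The three regularity conditions of Definition~\ref{d1} for $E(-H)$ (after normalizing $t=0$) involve $h^2(E(-2H+(c-2)f))$, $h^1(E(-2H+(c-1)f))$ and $h^1(E(-H-f))$; the last two $f$-twists, $c-1$ and $-1$, are exactly the ones \emph{not} covered by the four twists $\{0,c-2,a_0-1,a_1-1\}$ in~(1), and no amount of chasing through (\ref{eq1b})--(\ref{eq3b}) will kill them. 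A concrete counterexample to your localization: $E=\sO_X(f)$ satisfies~(1) and is regular, yet $E(-H)$ fails regularity only through $h^1(\sO_X(-2H+cf))=h^1(\PP^1,\sO_{\PP^1}(-2))=1\neq 0$, while $h^2(\sO_X(-2H+(c-1)f))=h^0(\PP^1,\sO_{\PP^1}(-1))=0$. The paper therefore must (and does) treat three separate cases according to which slot fails, and the two $H^1$ cases are the ones that produce the summands $\sO_X(f)$ and $\sO_X(H-f)$.

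Second, even if you arrive at a non-zero element of $\mathrm{Hom}(E(tH),\sL)$ for $\sL=\sO_X(f)$ or $\sO_X(H-f)$, the splitting mechanism ``global generation exactly as in Theorem~\ref{t1}'' does not apply. That argument works for $\sL=\sO_X$ only: there, a non-zero $\phi\in\mathrm{Hom}(E(tH),\sO_X)$ composed with the evaluation map yields a section $s$ of $E(tH)$ with $\phi\circ s$ a non-zero constant, hence an isomorphism of $\sO_X$. For $\sL=\sO_X(f)$, composing with evaluation only produces a non-zero section of $\sO_X(f)$, which vanishes along a fibre, so $\phi$ need not be split (nor even surjective). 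What the paper actually does in cases (ii) and (iii) is construct \emph{both} a map $g:\sL\to E$ (from $h^0(E\otimes\sL^\vee)\neq 0$, obtained via the twisted sequence (\ref{eq1b})) \emph{and} a map $h:E\to\sL$ (from $h^0(E^\vee\otimes\sL)\neq 0$, obtained from the dual side), and then prove $h\circ g\neq 0$ by the commutative diagram comparing the compositions with the non-degenerate Serre duality pairing $H^1(E(-2H+(c-1)f))\otimes H^1(E^\vee(-f))\to H^2(\omega_X)\cong\C$. This duality-pairing argument is the essential idea your proposal is missing; without it you cannot conclude that the two non-zero maps compose to an isomorphism, and the induction does not close.
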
\begin{proof}$(1)\Rightarrow (2)$. 
  Let assume that $t$ is an integer such that $E(tH)$ is regular but
  $E((t-1)H)$ not. Up to a twist we may assume $t=0$.\\
By the definition of regularity and $(1)$ we can say that $E(-H)$ is not regular if and only if one of the
following conditions is satisfied:
     \begin{enumerate}
     \item[i] $h^2(E(-2H+(c-2)f))\not=0$,
      \item[ii] $h^1(E(-2H+(c-1)f))\not=0$.
   \item[iii] $h^1(E(-H-f))\not=0$. \end{enumerate}
  Let us consider one by one the conditions:\\
     $(i)$ Let $h^2(E(-2H+(c-2)f))\not=0$, we can conclude that $\sO_X$ is a direct
summand as in the above theorem.\\
$(ii)$ Let $h^1(E(-2H+(c-1)f))\not=0$. Let us consider the exact sequence:

      $$0 \to E(-2H+(c-1)f)\to E(-H+(a_0-1)f)\oplus E(-H+(a_0\textcolor{red}{a_1?}-1)f)  \rightarrow E(-f)\rightarrow 0$$
       
Since $$ H^1(E(-H+(a_0-1)f)=H^1(E(-H+(a_1-1)f)=0,$$
we have a surjective map
 $$H^0(E(-f))\to H^1(E(-2H+(c-1)f)).$$ Therefore $H^0(E\otimes\sO_X(-f))\not=0$ and there exists a non zero map$$  g: \sO_X(f) \rightarrow  E.$$.\\
On the other hand  $$H^1(E(-2H+(c-1)f))\cong H^1(E^{\vee}(-f))$$ so let us consider the exact sequence
 $$0 \to 
  E^{\vee}(-f) \rightarrow 2E^\vee\to E^{\vee}(f)
\to 0.$$
Since
$$H^1( E^\vee)=H^1(E(-2H+(c-2)f))=0,$$

 we have a surjective map  $$H^0(E^{\vee}(f))\to H^{1}(E(-2H+(c-1)f)).$$ Therefore
 $H^0(E^{\vee}\otimes \sO_X(f))\not=0$ and there exists a non zero map$$  h: E\rightarrow  \sO_X(f).$$
 Let us consider the following commutative diagram:
 $$\begin{array}{ccc} H^{1}(E(-2H+(c-1)f))\otimes H^1(E^{\vee}(-f))& \xrightarrow{\sigma}& H^{2}(E(-2H+(c-2)f))\cong\mathbb C\\ \downarrow & &\downarrow \\
 H^0(E(-f)) \otimes H^1(E^{\vee}(-f))& \xrightarrow{\mu}& H^{1}(\sO_X(-f)\otimes \sO_X(-f))\cong\mathbb C\\
 \downarrow & &\downarrow \\
      H^0(E(-f))  \otimes H^0(E^{\vee}(f))& \xrightarrow{\tau}& H^{0}(\sO_X(-f)\otimes \sO_X(f))\cong\mathbb C\\
    \uparrow{\cong}&&\uparrow{\cong}\\
     {Hom}(E, \sO_X(f))\otimes{Hom}(\sO_X(f),E)&\xrightarrow{\gamma}&{Hom}(\sO_X(f),\sO_X(f)).    \end{array}$$The map $\sigma$ comes from Serre duality and it is not zero, the right vertical map are isomorphisms and the left vertical map are surjective so also the map $\tau$ is not zero.\\  This means that the the map $$h\circ g : \sO(f) \to \sO(f)$$ is non-zero and hence it is an
isomorphism.\\
This isomorphism shows that $\sO(f)$ is a direct summand of $E$.\\
$(iii)$ Let $h^1(E(-H-f))\not=0$. Let us consider the exact sequence:

      $$0 \to E(-H-f)\to E(-H) \rightarrow E(-H+f)\rightarrow 0.$$ Since $h^1(E(-H))=0$ we get $h^0(E(-H+f))\not=0$. By Serre duality $h^1(E(-H-f))=h^1(E^\vee(-H+(c-1)f))$. By the exact sequence
      $$0\to E^\vee(-H+(c-1)f)\to E^\vee((a_0-1)f)\oplus E^\vee((a_1-1)f)  \rightarrow E^\vee(H-f)\to 0.$$
      Since $h^1(E^\vee((a_0-1)f))=h^1(E(-2H+(a_1-1)f))=0$ and $h^1(E^\vee((a_1-1)f))=h^1(E(-2H+(a_0-1)f))=0$ we get also $h^0(E^\vee(H-f))\not=0$. By arguing as above we can conclude that $\Oo_X(H-f))$ is a direct summand of $E$.\\
  $(2)\Rightarrow (1)$. As in Theorem \ref{t1}.

\end{proof}
\begin{remark}If $c=2$ we get $a_0=a_1=1$ so $H^1(E(tH+(c-2)f))=H^1(E(tH+(a_0-1)f))=H^1(E(tH+(a_1-1)f))$ coincide with $H^1(E(tH))$ and we have exactly the classification of the ACM bundles on $\PP^1\times\PP^1$ (see \cite{Kn}).\\
The proof in this case coincides with \cite{bm3} Theorem $1.4$.
\end{remark}
\begin{remark}If $c=3$ we get $a_0=1, a_1=2$ so $H^1(E(tH+(c-2)f))=H^1(E(tH+(a_1-1)f))=H^1(E(tH+f))$ and $H^1(E(tH+(a_0-1)f))=H^1(E(tH))$. It is well known (see \cite{EH2}) that the only indecomposable ACM bundles on $X=S(1,2)$ are $\Oo_X,\Oo_X(f), \Oo_X(2f), \Oo_X(H-f)$ and the rank two vector bundle obtained as the extension among $\Oo_X(H-f)$ and $\Oo_X(2f)$. Since the condition $H^1(E(tH+f))=0$ for any integer $t$ is not satisfied by $\Oo_X(2f)$ we obtain again Theorem \ref{t2}.

\end{remark}

\begin{remark}If $c=4$ we get $a_0=2, a_1=2$ or $a_0=1, a_1=3$ so in the fist case $H^1(E(tH+(c-2)f))=H^1(E(tH+2f))$ and $H^1(E(tH+(a_0-1)f))=H^1(E(tH+(a_1-1)f))=H^1(E(tH+f))$ and in the second case $H^1(E(tH+(c-2)f))=H^1(E(tH+(a_1-1)f))=H^1(E(tH+2f))$ and $H^1(E(tH+(a_0-1)f))=H^1(E(tH))$. It is well known (see \cite{FM})  the classification of indecomposable ACM bundles on $X=S(2,2)$ or $X=S(1,3)$ and it is easy to  obtain again Theorem \ref{t2}.

\end{remark}
\begin{corollary}\label{cor2} Let $E$ be an indecomposable vector bundle on $X$ with $Reg(E)=0$ and $H^2(E(-2H+(c-2)f))=0$.

\begin{enumerate}
     
      \item  If $h^1(E(-2H+(c-1)f))\not=0$ and $H^1(E(-H))=H^1(E(-2H+(a_0-1)f))=H^1(E(-2H+(a_1-1)f))=0$ then $E\cong\sO_X(H-f)$.
   \item $h^1(E(-H-f))\not=0$ and $H^1(E(-2H+(c-2)f))=H^1(E(-H+(a_0-1)f))=H^1(E(-H+(a_1-1)f))=0$ then $E\cong\sO_X(f)$. \end{enumerate}

\end{corollary}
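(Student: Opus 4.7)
I would deduce the statement as an immediate specialization of the argument used in the proof of Theorem \ref{t2}, with the indecomposability hypothesis taking the place of the iteration step.

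First I would unpack $Reg(E)=0$ via Definition \ref{d1}: it says precisely that $E$ is regular while $E(-H)$ is not. The three defining vanishings for the regularity of $E(-H)$ are
\[
h^2(E(-2H+(c-2)f))=h^1(E(-2H+(c-1)f))=h^1(E(-H-f))=0,
\]
so at least one must fail. The standing hypothesis $H^2(E(-2H+(c-2)f))=0$ eliminates the first, leaving exactly the two cases of the statement.

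For Case 1, I would rerun the sub-case argument in the proof of Theorem \ref{t2} triggered by $h^1(E(-2H+(c-1)f))\neq 0$. A suitable twist of (\ref{eq1b}) gives
\[
0\to E(-2H+(c-1)f)\to E(-H+(a_0-1)f)\oplus E(-H+(a_1-1)f)\to E(-f)\to 0,
\]
and the vanishings supplied in the hypothesis---translated through the Serre duality identifications $h^1(E^\vee((a_i-1)f))\cong h^1(E(-2H+(a_{1-i}-1)f))$ and $h^1(E^\vee)\cong h^1(E(-2H+(c-2)f))$---yield a nonzero map $g:\sO_X(L)\to E$ and, via the dual sequence obtained from (\ref{eq2b}) tensored with $E^\vee$, a nonzero map $h:E\to\sO_X(L)$ for the appropriate line bundle $L$. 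The commutative diagram at the close of the proof of Theorem \ref{t2} then shows $h\circ g$ is a nonzero endomorphism of $\sO_X(L)$, hence an isomorphism; therefore $\sO_X(L)$ is a direct summand of $E$. Since $E$ is indecomposable, this upgrades to $E\cong\sO_X(L)=\sO_X(H-f)$. Case 2 is entirely parallel, using the sequence $0\to E(-H-f)\to E(-H)^{\oplus 2}\to E(-H+f)\to 0$ coming from (\ref{eq2b}) together with its Serre dual, and concluding $E\cong\sO_X(f)$.

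The only real work is the routine bookkeeping verifying that the specific cohomology vanishings listed in each case of the hypothesis pair up, via Serre duality, with exactly those needed to force the relevant connecting maps in the long exact sequences to surject onto the nonvanishing $H^1$ group. I do not anticipate a substantive obstacle: the hypotheses have been chosen to match precisely the requirements of the two sub-cases in the proof of Theorem \ref{t2}, and the indecomposability of $E$ simply replaces the iterative extraction argument used there with a single-step identification.
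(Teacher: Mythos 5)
There is a genuine gap, and it sits exactly where you defer to ``routine bookkeeping'': you have paired each trigger with the wrong sub-case of Theorem \ref{t2}. In that proof, the hypothesis $h^1(E(-2H+(c-1)f))\neq 0$ (sub-case $(ii)$) produces, via $H^0(E(-f))\neq 0$ and $H^0(E^\vee(f))\neq 0$, the summand $\sO_X(f)$ --- not $\sO_X(H-f)$ --- and the vanishings it consumes are $H^1(E(-2H+(c-2)f))=H^1(E(-H+(a_0-1)f))=H^1(E(-H+(a_1-1)f))=0$. Dually, the hypothesis $h^1(E(-H-f))\neq 0$ (sub-case $(iii)$) produces $\sO_X(H-f)$ and consumes $H^1(E(-H))=H^1(E(-2H+(a_0-1)f))=H^1(E(-2H+(a_1-1)f))=0$. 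Case $(1)$ of the corollary carries the trigger of sub-case $(ii)$ but the vanishing list and the conclusion of sub-case $(iii)$, and symmetrically for case $(2)$. So the hypotheses do \emph{not} ``match precisely the requirements'' of the sub-case you invoke: under the case-$(1)$ trigger, the maps $g$ and $h$ you need would have to go to and from $\sO_X(H-f)$, which requires $h^1(E(-H-f))\neq 0$ and $h^1(E^\vee(-H+(c-1)f))\neq 0$ --- neither of which is available from $h^1(E(-2H+(c-1)f))\neq 0$.

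The missing idea --- which is the entire content of the paper's short proof --- is to use the non-regularity of $E(-H)$ as a pivot: since $Reg(E)=0$ and $h^2(E(-2H+(c-2)f))=0$, at least one of $h^1(E(-2H+(c-1)f))$ and $h^1(E(-H-f))$ is nonzero, so the vanishing of one forces the non-vanishing of the other. Read this way (the paper's proof treats the displayed ``$\neq 0$'' conditions as ``$=0$''), case $(1)$ assumes $h^1(E(-2H+(c-1)f))=0$, deduces $h^1(E(-H-f))\neq 0$, and the listed vanishings are then exactly those of sub-case $(iii)$, giving $\sO_X(H-f)$. Taken literally with ``$\neq 0$,'' the statement is in fact false: $E=\sO_X(f)$ satisfies every hypothesis of case $(1)$ --- it is indecomposable, $Reg(\sO_X(f))=0$ by Remark \ref{b1}, $h^2(\sO_X(-2H+(c-1)f))=0$, $h^1(\sO_X(-2H+cf))=h^1(\sO_{\PP^1}(-2))=1\neq 0$, and $H^1(\sO_X(-H+f))=H^1(\sO_X(-2H+a_0f))=H^1(\sO_X(-2H+a_1f))=0$ by Lemma \ref{lem} --- yet $\sO_X(f)\not\cong\sO_X(H-f)$. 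Your proposal, by accepting the literal statement and asserting that the cohomological bookkeeping lines up, would not survive being carried out.
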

\begin{proof}
    Since $E(-H)$ is not regular and $H^2(E(-2H+(c-2)f))=0$, if $H^1(E(-2H+(c-1)f))=0$ then $H^1(E(-H-f))\not=0$ and viceversa. So, thanks the other vanishings, by the proof of the above Theorem, we obtain $(i)$ and $(ii)$.
\end{proof}

For $c>4$ the family of ACM bundles are too complicated (see \cite{FM}) but we can use our notion of regularity to study Ulrich bundles. We need the following Lemmas: 

\begin{lemma}\label{l5b}
If $E$ is a globally generated  ACM bundle on $X$, then 
 $$H^1(E((a-1)H+bf))=H^1(E(aH+(b-1)f))=0$$ for any $a,b\geq 0$.

\end{lemma}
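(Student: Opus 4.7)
The strategy is to prove both vanishings by combining induction in $a$ or $b$ with restriction of $E$ to a fiber $f$ or a smooth section $H\in|H|$, reducing everything to the ACM hypothesis and one base case handled by Serre duality.

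Since $E$ is globally generated and $f,H\cong\PP^1$, the restrictions $E|_f\cong\bigoplus\Oo_{\PP^1}(d_i)$ and $E|_H\cong\bigoplus\Oo_{\PP^1}(e_j)$ split with $d_i,e_j\geq 0$. Using $H\cdot f=1$, $f^2=0$, $H^2=c$, this yields $H^1(E(aH+bf)|_f)=0$ for every $a\geq -1$ and any $b$, and $H^1(E(aH+bf)|_H)=0$ whenever $ac+b\geq -1$.

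The first vanishing is equivalent to $H^1(E(aH+bf))=0$ for $a\geq -1$ and $b\geq 0$; I induct on $b$, with base case $b=0$ given by the ACM hypothesis and step $b-1\to b$ supplied by the long exact sequence of $0\to E(aH+(b-1)f)\to E(aH+bf)\to E(aH+bf)|_f\to 0$. The second vanishing then reduces to $H^1(E(aH-f))=0$ for $a\geq 0$, the case $b\geq 1$ being absorbed in the first vanishing; I induct on $a$, with the step $a\geq 1$ coming from $0\to E((a-1)H-f)\to E(aH-f)\to E(aH-f)|_H\to 0$, valid because $ac-1\geq c-1\geq 1$ forces the restriction term's $H^1$ to vanish.

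The hardest step is the base case $H^1(E(-f))=0$. Tensoring sequence (\ref{eq1b}) by $E(-H-f)$ produces
\begin{equation*}
0\to E(-2H+(c-1)f)\to E(-H+(a_0-1)f)\oplus E(-H+(a_1-1)f)\to E(-f)\to 0.
\end{equation*}
Since $a_0-1,a_1-1\geq 0$, the first vanishing (applied at $a=0$) kills $H^1$ of both middle summands, yielding an injection $H^1(E(-f))\hookrightarrow H^2(E(-2H+(c-1)f))$. Serre duality combined with $\omega_X\cong\Oo_X(-2H+(c-2)f)$ identifies the right-hand side with $H^0(E^\vee(-f))^\vee$. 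Finally, global generation gives a resolution $0\to K\to\Oo_X^N\to E\to 0$ by vector bundles; dualizing produces $E^\vee\hookrightarrow\Oo_X^N$, whence $H^0(E^\vee(-f))\hookrightarrow H^0(\Oo_X(-f))^N=0$ by Lemma \ref{lem}(i).
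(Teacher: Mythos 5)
Your proof is correct, and although it shares the paper's overall skeleton (restriction to rational curves plus induction, with the ACM hypothesis supplying the base cases in the $H$-direction), it diverges in two substantive ways. First, you exploit global generation through the splitting types of $E_{|f}$ and $E_{|H}$ on $\PP^1$ (all summands of non-negative degree), which immediately yields every vanishing of $H^1$ of a restricted twist that you need; the paper instead feeds the surjection $\Oo_X^N\twoheadrightarrow E$ into the right-exactness of $H^2$ on a surface to get $h^2(E(-H-f))=0$ and, via sequence (\ref{eq3b}), $h^2(E(-2f))=0$, and only then extracts $H^1(E_{|f}(-H))=0$ from the ACM hypothesis. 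Second, for the family $H^1(E(aH-f))=0$ your route --- induction on $a$ by restricting to a smooth hyperplane section $H\cong\PP^1$, with the base case $H^1(E(-f))=0$ obtained from sequence (\ref{eq1b}), Serre duality, and the inclusion $E^\vee\hookrightarrow\Oo_X^N$ killing $H^0(E^\vee(-f))$ --- is genuinely different from the paper's treatment of the second vanishing, which at that point only cites a tersely stated cohomology sequence and leaves the reduction to the reader. What your version buys is a self-contained, fully checkable argument for the harder half of the lemma, at the modest cost of also invoking smooth irreducible members of $|H|$ (which the paper already uses elsewhere, e.g.\ in Lemma \ref{l3}); the paper's version stays entirely on the fibers but is considerably more compressed.
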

\begin{proof} Since $E$ is globally generated we have a surjective map $$\Oo_X\to E\to 0.$$
 Since $h^2(\Oo_X(-H-f))=0$ we obtain $h^2(E(-H-f))=0$.\\
Let us consider this exact cohomology sequence:
$$\dots \rightarrow H^1(E(-H)) \rightarrow H^1(E_{|f}(-H)) \rightarrow H^{2}(E(-H-f))\rightarrow \dots$$
Since the first and the
third groups vanish  by hypothesis, then also the middle group vanishes. As in Lemma \ref{l2} $H^1(E_{|f}((a-1)H+bf))=0$ for any $a\geq 0$ and for any integer $b$. This implies $H^1(E((a-1)H+bf))=0$ for any $a,b\geq 0$.\\
By sequence (\ref{eq3b}) tensored by $E(-H-2f)$ we get $H^2(E(-2f))=0$. Let us consider this exact cohomology sequence:
$$\dots \rightarrow H^1(E) \rightarrow H^1(E(-f)) \rightarrow H^{2}(E(-f))\rightarrow \dots$$
Since the first and the
third groups vanish  by hypothesis, then also the middle group vanishes. We obtain
$H^1(E((aH+(b-1)f))=0$ for any $a,b\geq 0$.\\

\end{proof}
 
\begin{lemma}\label{l5}
If $E$ is an Ulrich bundle on $X$, then \begin{enumerate}
\item [i)] $H^2(E((a-2)H+bf))=0$ for any $a,b\geq 0$.
\item [ii)] $H^1(E((a-1)H+bf))=H^1(E(aH+(b-1)f))=0$ for any $a,b\geq 0$.
\item [iii)] $E$ is regular.
\end{enumerate}
\end{lemma}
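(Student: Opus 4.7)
The plan is to prove the three parts in the stated order: (i) by a double induction, (ii) by appealing to Lemma \ref{l5b} once we observe that Ulrich bundles are globally generated and ACM, and then (iii) by simply matching the conditions in Definition \ref{d1} against (i) and (ii).

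For part (i), I would induct on $a$, with a secondary induction on $b$ for fixed $a$. The base $a\in\{0,1\}$, $b=0$ is exactly the Ulrich vanishing $H^2(E(-2H))=H^2(E(-H))=0$. To extend to $b\ge 1$ while still in the base $a\in\{0,1\}$, tensor the pullback Euler sequence (\ref{eq2b}) by $E((a-2)H+bf)$:
$$0\to E((a-2)H+(b-1)f)\to E((a-2)H+bf)^{\oplus 2}\to E((a-2)H+(b+1)f)\to 0.$$
Since $X$ is a surface, the associated long exact sequence ends with a surjection $H^2(E((a-2)H+bf))^{\oplus 2}\twoheadrightarrow H^2(E((a-2)H+(b+1)f))$, so an induction on $b$ closes the base. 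For the inductive step $a\ge 2$ and $b\ge 0$, I tensor the dual relative Euler sequence (\ref{eq1b}) by $E((a-3)H+bf)$, obtaining the short exact sequence
$$0\to E((a-4)H+(b+c)f)\to E((a-3)H+(b+a_0)f)\oplus E((a-3)H+(b+a_1)f)\to E((a-2)H+bf)\to 0,$$
whose cohomology yields a surjection from the $H^2$ of the middle term onto $H^2(E((a-2)H+bf))$. The inductive hypothesis applied at level $a-1\ge 1$ with the non-negative $f$-indices $b+a_0,b+a_1\ge 1$ kills both middle contributions.

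For (ii), I appeal directly to Lemma \ref{l5b}. An Ulrich bundle is ACM by the definition of Ulrich, and it is globally generated: pushing $E$ along the embedding $j\colon X\hookrightarrow\mathbb P^N$, the sheaf $j_*E$ satisfies $H^i(j_*E(-i))=H^i(E(-iH))=0$ for $1\le i\le 2$, so it is $0$-regular on $\mathbb P^N$ in the classical Castelnuovo-Mumford sense and hence generated by global sections, whence so is $E$. Lemma \ref{l5b} then yields both equalities in (ii).

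For (iii), I unpack Definition \ref{d1} at $(p,p')=(0,0)$: regularity of $E$ amounts to $h^2(E(-H+(c-2)f))=h^1(E(-H+(c-1)f))=h^1(E(-f))=0$. The first equality is (i) with $(a,b)=(1,c-2)$, the second is the first formula of (ii) with $(a,b)=(0,c-1)$, and the third is the second formula of (ii) with $(a,b)=(0,0)$; each index is legitimate since $c\ge 2$. The main obstacle is purely bookkeeping in (i): one must verify that the indices produced on the right-hand side of the short exact sequences stay within the inductive range, and that the base cases $(a,b)\in\{(0,0),(1,0)\}$ feed correctly into the $b$-propagation and then into the $a$-step so that no pair $(a,b)$ with $a,b\ge 0$ is left uncovered.
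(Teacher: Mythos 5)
Your proposal is correct and follows essentially the same route as the paper: part (i) is obtained by propagating the Ulrich vanishings $h^i(E(-H))=h^i(E(-2H))=0$ through standard exact sequences (you use the two Euler sequences (\ref{eq1b}) and (\ref{eq2b}) where the paper invokes the fiber-restriction argument of Lemma \ref{l1}, a cosmetic difference), part (ii) is exactly the paper's appeal to Lemma \ref{l5b} after noting that Ulrich bundles are ACM and globally generated, and part (iii) is the same unpacking of Definition \ref{d1}. Your extra justification of global generation via classical $0$-regularity of $j_*E$ on $\mathbb{P}^N$ fills in a step the paper leaves implicit, and your index bookkeeping in (i) checks out.
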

\begin{proof} Since $E$ is Ulrich we have
$$h^i(E(-H))=h^i(E(-2H))=0$$ for any $i$. So we obtain $(i)$ as in Lemma \ref{l1}.\\
Since an Ulrich bundle is ACM and globally generated, by the abve Lemma $(ii)$ is proved.\\
By $(i)$ and $(ii)$ we obtain the vanishing of Definition \ref{d1} and hence $(iii)$.

\end{proof}

Thanks to our notion of regularity and the above we can give a simpler proof of \cite{FM} Theorem B without a Beilinson type spectral sequence:
\begin{theorem}[\cite{FM} Theorem B]\label{Ul0}
 An indecomposable $E$ on $X$ is Ulrich if and only if $E$ fits into:
\begin{equation}\label{Ul1}
 0 \to \sO_X(H-F)^{a} \to E \to \sO_X((c-1)f)^{b} \to 0, \qquad \mbox{for some
   $a,b \ge 0$.}
\end{equation}
\end{theorem}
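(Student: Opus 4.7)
The plan is to prove both directions separately, with the bulk of the work in the ``only if'' direction. For the ``if'' direction, Lemma~\ref{lem} shows that $\sO_X(H-f)$ and $\sO_X((c-1)f)$ are Ulrich: twisting $\sO_X(H-f)$ by $\sO_X(-H)$ and $\sO_X(-2H)$ gives $\sO_X(-f)$ and $\sO_X(-H-f)$, both of which have vanishing cohomology by parts (i) and (ii); similarly $\sO_X((c-1)f)$ is handled by parts (ii) and (iii). Since the Ulrich property is preserved under extensions, any $E$ fitting in~\eqref{Ul1} is Ulrich.

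For the converse, let $E$ be an indecomposable Ulrich bundle of rank $r$. By Lemma~\ref{l5}, $E$ is regular and hence globally generated (Remark~\ref{gg}), so on each fiber $f$ of $\pi:X\to\PP^1$ one has $E|_f\cong\bigoplus\sO_f(d_i)$ with $d_i\geq 0$. The first step will be to show $d_i\in\{0,1\}$. From the sequence $0\to E(-2H-f)\to E(-2H)\to E(-2H)|_f\to 0$ and the Ulrich vanishing $H^i(E(-2H))=0$, we get $H^0(E|_f(-2))\cong H^1(E(-2H-f))$. Serre duality rewrites this as $h^1(E^\vee((c-1)f))$, and a direct Serre-duality check shows that $E'':=E^\vee(H+(c-2)f)$ is itself Ulrich. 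Lemma~\ref{l5}(ii) applied to $E''$ (with $a=0$, $b=1$) then gives $H^1(E''(-H+f))=H^1(E^\vee((c-1)f))=0$, forcing $H^0(E|_f(-2))=0$ and $d_i\leq 1$. Letting $a$ (resp.\ $b$) be the number of indices with $d_i=1$ (resp.\ $d_i=0$), we obtain $E|_f\cong\sO_f(1)^a\oplus\sO_f^b$ on every fiber, with $a+b=r$.

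Next, I will set $V:=H^0(E(-H+f))$ and analyze the evaluation map $\phi:\sO_X(H-f)^a\to E$. The sequence $0\to E(-H)\to E(-H+f)\to E(-H+f)|_f\to 0$ with $H^i(E(-H))=0$ for $i=0,1$ gives $V\cong H^0(E|_f(-1))=\kk^a$. Restricting $\phi$ to a fiber $f_0$ and using $\mathrm{Hom}(\sO_{f_0}(1),\sO_{f_0})=0$, the sections in $V$ correspond exactly to the full space $\mathrm{Hom}(\sO_{f_0}(1),\sO_{f_0}(1)^a)=\kk^a$, so $\phi|_{f_0}$ is the inclusion of the first summand $\sO_{f_0}(1)^a\hookrightarrow\sO_{f_0}(1)^a\oplus\sO_{f_0}^b$. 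Hence $\phi$ is fiberwise injective at every point of $X$, making it a subbundle inclusion with locally free cokernel $E'$ of rank $b$ satisfying $E'|_f\cong\sO_f^b$ on every fiber.

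Finally, since $E'$ is trivial on every fiber of $\pi$, Grauert-type cohomology and base change yields $E'\cong\pi^*\pi_*E'\cong\bigoplus_{j=1}^b\sO_X(\alpha_jf)$. To pin down $\alpha_j$, tensor $0\to\sO_X(H-f)^a\to E\to E'\to 0$ with $\sO_X(-2H)$: Lemma~\ref{lem}(ii) kills the cohomology of the left term $\sO_X(-H-f)^a$, and the Ulrich vanishing gives $H^i(E(-2H))=0$, so $H^i(E'(-2H))=0$ for all $i$. Applying Lemma~\ref{lem}(iii) to the summands $\sO_X(-2H+\alpha_jf)$ then forces $H^k(\PP^1,\sO_{\PP^1}(c-\alpha_j-2))=0$ for $k=0,1$, hence $c-\alpha_j-2=-1$ and $\alpha_j=c-1$. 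This yields~\eqref{Ul1}. The main technical point I expect to be delicate is the fiberwise injectivity of $\phi$: correctly matching the image of $V$ with maps into the $\sO_{f_0}(1)^a$ summand of $E|_{f_0}$ will require careful tracking of the restriction to $f_0$, but once this is in place everything else reduces to direct cohomology computation with Lemma~\ref{lem}.
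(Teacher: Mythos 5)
Your proof is correct, but it follows a genuinely different route from the paper's. The paper stays entirely inside its regularity formalism: since $E$ is regular (Lemma~\ref{l5}) but $h^0(E(-H))=0$ forces $E(-H)$ to be non-regular, exactly one of $h^1(E(-H-f))\neq 0$ or $h^1(E(-2H+(c-1)f))\neq 0$ must occur; each case is treated by producing maps to and from $\sO_X(H-f)$ (resp.\ $\sO_X((c-1)f)$) via Serre-duality pairings, invoking \cite{CH} to keep kernels and cokernels Ulrich, and iterating until the map $\sO_X(H-f)^a\to E$ is injective, with the second case reduced to the first by passing to $E^\vee(H+(c-2)f)$. You instead argue geometrically along the fibers of $\pi$: global generation plus the vanishing $H^1(E^\vee((c-1)f))=0$ (which you correctly extract from Lemma~\ref{l5}(ii) applied to the Ulrich dual twist) pins the splitting type on every fiber to $\sO_f(1)^a\oplus\sO_f^b$, after which the relative evaluation map, cohomology and base change, and Grothendieck's theorem on $\PP^1$ produce the extension directly, with the twist $\alpha_j=c-1$ forced by Lemma~\ref{lem}(iii). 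Your version avoids the case analysis, the appeal to \cite{CH}, and the delicate iteration needed in the paper to make $h$ injective; it also supplies the ``if'' direction explicitly, which the paper leaves implicit. What it gives up is precisely what the paper is advertising: a proof that runs only on the regularity machinery developed in Section~2 (the paper's case $(\alpha)$ argument is essentially a rerun of Theorem~\ref{t2}). One small point worth making explicit in your write-up: the number $a$ of degree-one summands is the same on every fiber because all $d_i\in\{0,1\}$ and $\sum_i d_i=c_1(E)\cdot f$ is constant; this is needed before you can speak of a single evaluation map $\sO_X(H-f)^a\to E$ that is fiberwise injective everywhere.
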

\begin{proof}
By $(iii)$ of Lemma \ref{l5}, $E$ is regular and since $h^0(E(-H))=0$, $E(-H)$ must be not regular.  
  By $(i)$ and $(ii)$ of Lemma \ref{l5}, $h^2(E(-2H+(c-2)f))=0$ and we may conclude that one of the
following conditions is satisfied:
     \begin{enumerate}
      \item [$\alpha$)] $h^1(E(-H-f))\not=0$.
   \item[$\beta)$] $h^1(E(-2H+(c-1)f))\not=0$. \end{enumerate}
   $\alpha)$ Let $h^1(E(-H-f))=a\not=0.$
  Let us consider the exact sequence:

      $$0 \to E(-H-f)\to E(-H) \rightarrow E(-H+f)\rightarrow 0.$$ Since $h^1(E(-H))=h^0(E(-H))=0$ we get $h^0(E(-H+f))=a$.\\
      So there exists a map $$  h: \sO(H-f)^a\rightarrow  E.$$ 
      Let $b=h^1(E(-2H+f))$. We distinguish two cases: $b=0$ and $b\not=0$.\\
      Let assume first $b=0$.
      By Serre duality $h^1(E(-H-f))=h^1(E^\vee(-H+(c-1)f))=a$. 
      From
      $$\dots \rightarrow H^1(E(-2H+f)) \rightarrow H^1(E_{|f}(-2H+f)) \rightarrow H^{2}(E(-H))\rightarrow \dots$$
since the first ($b=0$) and the
third groups vanish  by hypothesis, then also the middle group vanishes. As in Lemma \ref{l2} $H^1(E_{|f}((a-2)H+bf))=0$ for any $a\geq 0$ and for any integer $b$. This implies $H^1(E((a-1)H+bf))=0$ for any $a,b\geq 0$. In particular $h^1(E^\vee((a_0-1)f))=h^1(E(-2H+(a_1-1)f))=0$ and $h^1(E^\vee((a_1-1)f))=h^1(E(-2H+(a_0-1)f))=0$ so, from
 the exact sequence
      $$0\to E^\vee(-H+(c-1)f)\to E^\vee((a_0-1)f)\oplus E^\vee((a_1-1)f)  \rightarrow E^\vee(H-f)\to 0,$$ we get also $h^0(E^\vee(H-f))\not=0$. Hence as in Theorem \ref{t2} we obtain $E\cong\Oo_X(H-f)$.\\
Let assume now $b>0$. By \cite{CH} we may assume that the kernel $K$ and the cokernel $G$ of $h$ are also Ulrich. So we obtain two exact sequences with also $U$ Ulrich:
$$0\to U\to E\to G\to 0,$$ and
\begin{equation}\label{kk}0\to K\to\Oo_X(H-f)^a\to U\to 0.\end{equation}
Notice that by Lemma \ref{l5} and sequence (\ref{eq1b}) tensored by $U(-2H-f)$ we get $h^2(U(-H-f))=0$. So
if we twist the sequence (\ref{kk}) by $-H-f$, since $h^0(U(-H-f))=h^2(U(-H-f))=0$ and $h^1(\Oo_X(-2f)^a))=a$, we have $h^1(K(-H-f))\not= 0$. Now let us consider the sequence in cohomology:
$$H^0(U(-2H+f))\to H^1(K(-2H+f))\to H^1(\Oo_X(-H)^a)$$
Since $H^0(U(-2H+f))=H^1(\Oo_X(-H)^a)=0$, we get $H^1(K(-2H+f))=0$ and by the argument above for $b=0$ we may conclude that $\Oo_X(H-f)$ is a direct summand of $K$. By iterating this argument we get the sequence (\ref{kk}) simply becomes
$$0\to\Oo_X(H-f)^{a'}\to\Oo_X(H-f)^a\to\Oo_X(H-f)^{a-a'}\to 0$$ for a suitable positive integer $a'$. Hence we may assume that $h$ is injective.
Let us denote by $G$ the cokernel of $h$:
$$0\to\Oo_X(H-f)^a\to E\to G\to 0.$$
Notice that $h^1(G(-H-f))=0$.
If we twist the above exact sequence by $\Oo_X(-2H+tf)$, since $h^i(\Oo_X(-H+tf))=0$ for any $i,t$ we get $b=h^1(G(-2H+f))$ and $h^1(G(-2H+tf))=h^1(E(-2H+tf))$ for any integer $t$.\\
From $$0 \to 
  G(-H-2f) \rightarrow G(-H-f)^2\to G(-H)
\to 0,$$ we obtain $h^1(G(-H-tf))=0$ for any $t\geq 0$.\\
Let us consider the exact sequence:

      $$0 \to G(-2H+f)\to G(-H+(-a_0+1)f)\oplus G(-H+(-a_1+1)f)  \rightarrow G(-(c-1)f)\rightarrow 0$$
       
Since $$ H^i(G(-H+(-a_0+1)f))=H^i(G(-H+(-a_1+1)f))=0,$$ for any $i$,
we have $H^0(G\otimes\sO_X(-(c-1)f))=b\not=0$.
On the other hand  $$H^1(G(-2H+f))\cong H^1(G^{\vee}((c-3)f))$$ so let us consider the exact sequence
 $$0 \to 
  G^{\vee}((c-3)f) \rightarrow 2G^\vee((c-2)f)\to G^{\vee}((c-1)f)
\to 0.$$
Since
$$H^1( G^\vee((c-2)f))=H^1(G(-2H))=0,$$

 we get
 $H^0(G^{\vee}\otimes \sO_X((c-1)f))=b\not=0$. So by arguing as in Theorem \ref{t2} we obtain $G\cong \sO_X((c-1)f)^b$ and $E$ fits in (\ref{Ul1}).\\
$ \beta)$ Notice that if $E$ is Ulrich also $E'=E^\vee (H+(c-2)f)$ is Ulrich. The condition $$h^1(E(-2H+(c-1)f))=b\not=0$$ by Serre duality corresponds to $$h^1(E^\vee(-f))=h^1(E^\vee (H+(c-2)f)\otimes\Oo_X(-H-(c-1)f))=h^1(E'(-H-(c-1)f)\not=0.$$ Let us consider for any integer $t$ the exact sequence $$0\to E'(-H-(t+2)f)\to E'(-H-(t+1)f)\to E'(-H-tf))\to 0.$$ If $t\geq 0$ the map
 $$H^1(E'(-H-(t+2)f))\to H^1(E'(-H-(t+1)f)$$ is injective. Since $h^1(E'(-H-(c-1)f))\not=0$, we have $h^1(E'(-H-(c-2)f))\not=0$ and by a recursive argument $h^1(E'(-H-tf))\not=0$ for $-(c-1)\leq t\leq 1$.
 In particular $h^1(E'(-H-f))\not=0$, so we may repeat the argument of the case $(\alpha)$ for the Ulrich bundle $E'$ and we obtain $E'\cong \Oo_X(H-f)$ (hence $E\cong \Oo_X((c-1)f)$) or $E'$ fits in the extension $$0 \to \sO_X(H-F)^{b} \to E' \to \sO_X((c-1)f)^{a} \to 0$$ that dualized and tensored by $\Oo_X(H+(c-2)f)$ becomes (\ref{Ul1}).
\end{proof}
\begin{remark}If $c=2$, $\dim (Ext^1(\sO_X((c-1)f),\sO_X(H-f)))=0$ so (\ref{Ul1}) splits.\\ 
If $c=3$, $\dim (Ext^1(\sO_X((c-1)f),\sO_X(H-f)))=1$ so from (\ref{Ul1}) we only obtain a unique rank two indecomposable Ulrich bundle.\\
If $c=4$, $\dim (Ext^1(\sO_X((c-1)f),\sO_X(H-f)))=2$ so from (\ref{Ul1}) we only obtain families of dimension at most one of indecomposable Ulrich bundles (see \cite{FM}).\\
If $c>4$, $\dim (Ext^1(\sO_X((c-1)f),\sO_X(H-f)))>2$ so from (\ref{Ul1}) we may obtain arbitrary large families of indecomposable Ulrich bundles (see \cite{FM}).\\
\end{remark}

\section{Logarithmic bundle on $S(a_0,a_1)$}
\begin{definition}
A divisor $D$ on a non-singular variety $X$ is said to have normal crossings if $\Oo_{D,x}$ is formally isomorphic to the quotient of $\Oo_{X,x}$ by an ideal generated by $t_1,\ldots,t_k$, where $t_1,\ldots,t_k$ is a subset of the set of local parameters in $\Oo_{X,x}$ for all $x \in D$. $D$ is also said to have \em simple normal crossings\em\  if it is the union of
smooth divisors $D_i, i = 1,\ldots,m$, which intersect transversely at each point.
\end{definition} 
\begin{definition}
 An arrangement on $X$ is defined to be a set $D = \{D_1,\ldots , D_m\}$ of smooth irreducible divisors of $X$ such that $D_i 
\neq D_j$ for $i \neq j$. To an arrangement $D$ on $X$, we can associate the
\em logarithmic sheaf\em\ $\Omega^1
_X(\log D)$, the sheaf of differential $1$-forms with logarithmic poles along $D$.
\end{definition}
If $D$ has simple normal crossings, its logarithmic sheaf is known to be locally free and so it can
be called to be the logarithmic bundle. It admits the residue exact sequence
\begin{equation}
0 \rightarrow \Omega^1_
X \rightarrow \Omega^1_
X(\log D)
\stackrel{res}{\rightarrow}
\bigoplus {\epsilon_{i^*}}  \Oo_{D_i}\rightarrow 0,
\end{equation}
From now on, let $X$ be $S(a_0,a_1)$ and let $e=a_1-a_0$. Let us consider the lines $L_i\in |\sO_X(f)|$. Recall (see \cite{ha2} II 8.11.) that the cotangent bundle of $X$ is given in 
$$0\to \Oo_X(-2f) \to \Omega_X^1 \to \Oo_{X}(-2H+cf) \to 0,$$ and this extension splits only if $e=0$:
\begin{proposition}\label{line}
Let $D=\{L_1, \cdots, L_a\}$ be an arrangement of $a$ lines on $X$ with $L_i \in |\Oo_X(f)|$. Then we have
$$\Omega_X^1 (\log D) \cong \Oo_X((a-2)f)\oplus \Oo_X(-2H+cf),$$ if $a\geq e+1.$
\end{proposition}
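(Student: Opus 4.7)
The plan is to dualize and prove equivalently that
$$T_X(-\log D) \cong \Oo_X(2H-cf)\oplus \Oo_X(-(a-2)f).$$

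First I would construct an exact sequence
$$0 \to \Oo_X(2H-cf) \to T_X(-\log D) \to \Oo_X(-(a-2)f) \to 0.$$
Dualizing the cotangent sequence yields $0 \to \Oo_X(2H-cf) \to T_X \to \Oo_X(2f) \to 0$, and dualizing the residue sequence gives $0 \to T_X(-\log D) \to T_X \to \bigoplus_i \epsilon_{i*}N_{L_i/X} \to 0$, with $N_{L_i/X}\cong\Oo_{L_i}$ (since $f\cdot L_i = 0$). The sub $\Oo_X(2H-cf)$ (the relative tangent) lifts through $T_X(-\log D)\subset T_X$ because, restricted to each fiber $L_i$, the composition $\Oo_X(2H-cf)|_{L_i}\to N_{L_i/X}$ is a map $\Oo_{L_i}(2)\to\Oo_{L_i}$ (using $H\cdot L_i=1$, $f\cdot L_i=0$), which must vanish. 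Composing instead $T_X(-\log D)\hookrightarrow T_X\twoheadrightarrow \Oo_X(2f)$, the image vanishes along each $L_i$ by the defining property of $T_X(-\log D)$, hence lies in $\Oo_X(2f)\otimes \mathcal I_D = \Oo_X(-(a-2)f)$. A local check in coordinates $(s,t_i)$ near a point of $L_i$, noting that $t_i\partial_{t_i}\in T_X(-\log D)$ maps to a local generator, gives surjectivity; and a determinant comparison using $\det T_X(-\log D) = \omega_X^{-1}\otimes\Oo_X(-D) = \Oo_X(2H-(c+a-2)f)$ forces the kernel to equal $\Oo_X(2H-cf)$.

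Next I would split the sequence by verifying
$$\mathrm{Ext}^1(\Oo_X(-(a-2)f),\Oo_X(2H-cf)) = H^1(X,\Oo_X(2H+(a-2-c)f)) = 0.$$
By Lemma \ref{lem}(i) this cohomology equals $H^1(\PP^1,\mathrm{Sym}^2\Ee\otimes\Oo_{\PP^1}(a-2-c))$, and since $\mathrm{Sym}^2\Ee = \Oo(2a_0)\oplus \Oo(c)\oplus \Oo(2a_1)$, the group decomposes as a sum of $H^1(\PP^1,\Oo(d))$ for $d\in\{a-2-e,\ a-2,\ a-2+e\}$. All three vanish exactly when $a-2-e\ge -1$, i.e.\ $a\ge e+1$, which is the hypothesis; so the sequence splits and dualizing back gives the claim.

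The main obstacle is the exactness check in Step~1, specifically pinning down that the kernel of the constructed surjection onto $\Oo_X(-(a-2)f)$ is precisely $\Oo_X(2H-cf)$ (rather than a strictly larger line bundle); the determinant identity handles this cleanly once the two maps are in place. After that, the splitting reduces to the cohomological computation above, and the hypothesis $a\ge e+1$ emerges transparently as the tightest of the three degree conditions coming from $\mathrm{Sym}^2\Ee$.
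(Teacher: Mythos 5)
Your proof is correct, but it reaches the filtration of the logarithmic bundle by a genuinely different route than the paper. The paper works on the cotangent side and inducts on the number of lines: for each new $L_i$ it computes $\mathrm{Ext}^1(\Oo_{L_i},\Oo_X(-2H+cf))=\mathrm{Ext}^0(\Oo_{L_i},\Oo_X(-2H+cf))=0$ to show the extension class of the residue sequence lives entirely in the one-dimensional group $\mathrm{Ext}^1(\Oo_{L_i},\Oo_X((a-3)f))$, whose unique nontrivial extension is $\Oo_X((a-2)f)$; this builds the two-step filtration $\Oo_X((a-2)f)\subset\Omega^1_X(\log D)$ with quotient $\Oo_X(-2H+cf)$ one line at a time. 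You instead dualize and produce the filtration in one shot from the fibration: the relative tangent bundle $\Oo_X(2H-cf)=T_{X/\PP^1}$ is automatically logarithmic for an arrangement of fibers (your $\mathrm{Hom}(\Oo_{L_i}(2),\Oo_{L_i})=0$ argument, or simply because fiber-tangent fields are tangent to each $L_i$), the projection to $\pi^*T_{\PP^1}=\Oo_X(2f)$ of a logarithmic field vanishes on $D$ so lands in $\Oo_X(-(a-2)f)$, and the determinant identity $\det T_X(-\log D)=-K_X-D$ pins down the kernel. Both arguments then conclude with the identical cohomological vanishing, $\mathrm{Ext}^1$ of the quotient by the sub being $H^1(\Oo_X(2H+(a-2-c)f))$, which vanishes precisely for $a\ge e+1$ (your decomposition via $\mathrm{Sym}^2\Ee$ is in fact slightly more complete than the paper's, which only records the lowest-degree summand $\Oo_{\PP^1}(a-2-e)$, the binding one). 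Your construction is arguably cleaner and avoids the extension-class bookkeeping, at the price of the local surjectivity check and the determinant comparison; the paper's inductive scheme has the advantage that it transports directly to the mixed arrangements of lines and rational curves treated in the subsequent Proposition~\ref{lineC} and Theorem~\ref{lineCb}, where the divisors are no longer all fibers of the same fibration.
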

\begin{proof}
Let $D=\{L_1\}$. We apply the covariant functor $Hom(\Oo_{L_1},-)$ to vertical column of the diagram 
$$\begin{array}{ccccc}& 0 & & & \\ &\downarrow && & \\ & \Oo_X(-2f) &  &&\\ &\downarrow &&& \\
0\rightarrow& \Omega^1_X &\rightarrow &\Omega^1_X(\log D)&\rightarrow \Oo_{L_1}\to 0
 \\
&\downarrow && & \\
 & \Oo_X(-2H+cf)&&
  \\
&\downarrow && & \\ & 0 & & & 
 \end{array}$$
As the dimension of $Ext^1 (\Oo_{L_1}, \Oo_X(-2H+cf))$ is $h^1(\Oo_{L_1}\otimes\Oo_X(-2f))=h^1(\Oo_{\mathbb P^1})=0$ and the dimension of $Ext^0 (\Oo_{L_1}, \Oo_X(-2H+cf))$ is $h^2(\Oo_{L_1}\otimes\Oo_X(-2f))=h^2(\Oo_{\mathbb P^1})=0$, we get $$Ext^1(\Oo_{L_1},\Omega^1_X)\cong Ext^1 (\Oo_{L_1}, \Oo_X(-2f))$$ and their dimension is $h^1(\Oo_{L_1}\otimes\Oo_X(-2H+cf))=h^1(\Oo_{L_1}(-2))=h^1(\Oo_{\mathbb P^1}(-2))=1$. So we get the unique extension (note that $\Oo_{L_1}\otimes\Oo_X(-f)\cong\Oo_{L_1}$) $0\to \Oo_X(-2f) \to \Oo_X(-f) \to \Oo_{L_1} \to 0$ to close the following diagram
 $$\begin{array}{ccccc}& 0 & &0 & \\ &\downarrow &&\downarrow  & \\ 0\to& \Oo_X(-2f) & \to &\Oo_X(-f) &\to \Oo_{L_1} \to 0.\\ &\downarrow &&\downarrow &\| \\
 0 \rightarrow& \Omega^1_X &\rightarrow &\Omega^1_X(\log D)&\rightarrow \Oo_{L_1}\to 0
 \\
&\downarrow &&\downarrow & \\
 & \Oo(-2H+cf)&=&\Oo(-2H+cf)
  \\
&\downarrow &&\downarrow & \\ & 0 & &0 & 
 \end{array}$$
 \\ Now, let $a\geq 2$, $D={L_1,\ldots,L_a}$, $D'={L_1,\ldots,L_{a-1}}$ and the assertion true for $a-1$ to argue by induction. Similarly, we get the diagram
  $$\begin{array}{ccccc}
  & 0 & &0 & \\ &\downarrow &&\downarrow  & \\ 0\to& \Oo_X((a-3)f) & \to &\Oo_X((a-2)f) &\to \Oo_{L_1} \to 0.\\ &\downarrow &&\downarrow &\| \\
 0 \rightarrow& \Omega^1_X(\log ((a-1)f)) &\rightarrow &\Omega^1_X(\log (af))&\rightarrow \Oo_{L_1}\to 0\\
&\downarrow & &\downarrow & \\
 & \Oo_X(-2H+cf)&=&\Oo_X(-2H+cf) & \\
 &\downarrow &&\downarrow & \\
 & 0 & &0 & 
 \end{array}$$
 in fact, the dimension of $Ext^1 (\Oo_{L_1}, \Oo_X((a-3)f)))$ is $h^1(\Oo_{L_1}\otimes\Oo_X(-2H+(c-e)f)))=h^1(\Oo_{L_1}(-2)=h^1(\Oo_{\mathbb P^1}(-2)))=1$ and the dimension of $Ext^1 (\Oo_{L_1}, \Oo_X(-2H+cf)$ is $h^1(\Oo_{L_1}\otimes\Oo_X(-2f))=0$.\\ 
 In order to have a splitting sequence in the second column, it is enough that $Ext^1(\Oo_X(-2H+cf),\Oo_X((a-2)f)=0$.  We have that the dimension of $Ext^1(\Oo_X(-2H+cf),\Oo_X((a-2)f))$ is $h^1(\Oo_X(2H+(a-2-c)f))$ which is zero if $ h^1(\Oo_{\mathbb P^1} (2a_0+a-2-c))=0$. This implies $2a_0+a-c\geq 1$, hence $a\geq e+1.$
\end{proof}
\begin{proposition}\label{lineC}
Let $D=\{L_1, \cdots, L_a, C_1\}$ be an arrangement of $a\geq e+1$ lines and one rational curve on $X$ with $L_i \in |\Oo_X(f)|$ and $C_1\in |\Oo_X(H-a_1f)|$. Then we have
$$\Omega_X^1 (\log D) \cong \Oo_X((a-2)f)\oplus \Oo_X(-H+a_0f). $$
\end{proposition}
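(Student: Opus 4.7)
The plan is to refine Proposition~\ref{line} by exploiting the relative structure of $\pi\colon X\to\mathbb P^1$. Set $D'=\{L_1,\ldots,L_a\}$; Proposition~\ref{line} gives
\[\Omega_X^1(\log D')\;\cong\;\Oo_X((a-2)f)\oplus\Oo_X(-2H+cf),\]
in which, geometrically, the first summand plays the role of the ``base'' direction $\pi^*\Omega^1_{\mathbb P^1}(\log\pi(D'))$ and the second that of the ``fiber'' direction $\Omega^1_{X/\mathbb P^1}$. Combining with the residue sequence
\[0\to\Omega_X^1(\log D')\to\Omega_X^1(\log D)\to\Oo_{C_1}\to 0,\]
I would produce the short exact sequence
\[
0\to\Oo_X((a-2)f)\to\Omega_X^1(\log D)\to\Oo_X(-H+a_0 f)\to 0 \qquad (\star)
\]
The idea is that the base summand is unaffected by $C_1$ (a section of $\pi$), while the fiber summand $\Oo_X(-2H+cf)$ acquires a pole along $C_1\in|H-a_1 f|$ and saturates to $\Oo_X(-2H+cf+C_1)=\Oo_X(-H+a_0 f)$.

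To identify the quotient in $(\star)$ I would argue locally. At a generic $p\in C_1\setminus\bigcup L_i$, pick coordinates $(x,y)$ with $C_1=\{y=0\}$ and the fibers of $\pi$ cut out by $\{x=\text{const}\}$. Then $\Omega_X^1(\log D)$ has local basis $\{dx,\,dy/y\}$, the subsheaf $\Oo_X((a-2)f)$ is locally generated by $dx$, so the quotient is locally free of rank one; the Chern-class computation $c_1(\Omega_X^1(\log D))=K_X+[D]=-H+(a+a_0-2)f$ then identifies it with the line bundle $\Oo_X(-H+a_0 f)$. An equivalent global route is the $3\times 3$ diagram
\[\begin{array}{ccccc}
& 0 & & 0 & \\ & \downarrow & & \downarrow & \\
& \Oo_X((a-2)f) & = & \Oo_X((a-2)f) & \\
& \downarrow & & \downarrow & \\
0 \to & \Omega_X^1(\log D') & \to & \Omega_X^1(\log D) & \to \Oo_{C_1} \to 0 \\
& \downarrow & & \downarrow & \| \\
0 \to & \Oo_X(-2H+cf) & \to & \Oo_X(-H+a_0 f) & \to \Oo_{C_1} \to 0 \\
& \downarrow & & \downarrow & \\
& 0 & & 0 &
\end{array}\]
whose bottom row is the twist of $0\to\Oo_X(-C_1)\to\Oo_X\to\Oo_{C_1}\to 0$ by $\Oo_X(-H+a_0 f)$; since $Ext^1(\Oo_{C_1},\Oo_X(-2H+cf))\cong\mathbb C$ (Lemma~\ref{lem}(ii)--(iii) applied to the resolution of $\Oo_{C_1}$), the pushout of the residue class there is either zero or equal to this bottom row, and the local model above rules out the former.

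Finally, to split $(\star)$ I would check
\[Ext^1(\Oo_X(-H+a_0 f),\Oo_X((a-2)f))\;=\;H^1(\Oo_X(H+(a-2-a_0)f))\;=\;0.\]
By Lemma~\ref{lem}(i) this equals $H^1(\mathbb P^1,\Oo(a-2)\oplus\Oo(a-2+e))$, which vanishes whenever $a\ge 1$; this is ensured by the hypothesis $a\ge e+1$. Hence $\Omega_X^1(\log D)\cong\Oo_X((a-2)f)\oplus\Oo_X(-H+a_0 f)$. The main obstacle is precisely the middle step --- showing that $\Omega_X^1(\log D)/\Oo_X((a-2)f)$ is a line bundle, equivalently the nontriviality of the relevant pushout --- which is best handled by the local logarithmic model; everything else is routine Ext and cohomology bookkeeping in the spirit of Proposition~\ref{line}.
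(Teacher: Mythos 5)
Your argument is correct, but it reaches the conclusion by a genuinely different route than the paper. The paper inserts $C_1$ as soon as exactly $e+1$ lines are present: at that stage $Ext^1(\Oo_{C_1},\Oo_X((e-1)f))\cong H^1(\Oo_{\PP^1}(e-a))$ vanishes (this is precisely why its first step is restricted to $a=e+1$), so the residue class lives entirely in the one-dimensional $Ext^1(\Oo_{C_1},\Oo_X(-2H+cf))$ and the middle term is forced to be the untouched summand plus the unique nonsplit extension $\Oo_X(-H+a_0f)$; the remaining lines are then added one at a time by a second induction. You instead attach $C_1$ last, after all $a$ lines. The price is that $Ext^1(\Oo_{C_1},\Oo_X((a-2)f))\cong H^1(\Oo_{\PP^1}(e-a))$ has dimension $a-e-1>0$ when $a>e+1$, so you cannot read off a direct-sum structure from the residue class alone; your workaround --- quotient by the subbundle $\Oo_X((a-2)f)$, identify $\Omega^1_X(\log D)/\Oo_X((a-2)f)$ as the nonsplit, hence locally free, extension $\Oo_X(-H+a_0f)$ via the local model $\{dx,\,dy/y\}$ (or $c_1$), then split $(\star)$ using $Ext^1(\Oo_X(-H+a_0f),\Oo_X((a-2)f))\cong H^1(\PP^1,\Oo_{\PP^1}(a-2)\oplus\Oo_{\PP^1}(a-2+e))=0$ for $a\ge 1$ --- is sound, and it shows in passing that the component of the residue class in the large Ext group is irrelevant to the isomorphism type. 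What you gain is a one-shot argument with no second induction and an explicit justification (the local pole computation) of the nontriviality of the pushout, which the paper only asserts (``it must be''); what the paper's ordering buys is that every Ext group it meets is at most one-dimensional, so no pushout or saturation argument is ever needed. One point worth making explicit in your write-up: identifying the summand $\Oo_X((a-2)f)$ with the span of $dx$ at a general point of $C_1$ uses that the splitting of Proposition~\ref{line} is built on top of $\pi^*\Omega^1_{\PP^1}=\Oo_X(-2f)$, so the two subsheaves agree away from the $L_i$; this follows from the construction in that proof, but it is the one place where you use more than the abstract isomorphism class of $\Omega^1_X(\log D')$.
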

\begin{proof}
Let us first consider the case of $D=\{L_1, \cdots, L_{e+1}, C_1\}$. Let $D'=\{L_1, \cdots, L_{e+1}\}$. Then we have the sequence
$$0\to \Omega_X^1 (\log D') \to \Omega_X^1 (\log D) \to \Oo_{C_1} \to 0.$$
Thanks to Proposition \ref{line}
$$\Omega_X^1 (\log D') \cong \Oo_X((e-1)f)\oplus \Oo_X(-2H+cf).$$ 
Note that the dimension of $Ext^1 (\Oo_{C_1}, \Oo_X((a-2)f))$ is $h^1(\Oo_{C_1}\otimes\Oo_X(-2H+(c-a)f))=h^1(\Oo_{\mathbb P^1}(-2c+(c-a+2a_1))=h^1(\Oo_{\PP^1}(-c+2a_1-a))=h^1(\Oo_{\PP^1}(e-a))=0$ if $a\leq e+1$ and the dimension of $Ext^1 (\Oo_{C_1}, \Oo_X(-2H+cf)$ is $h^1(\Oo_{C_1}\otimes\Oo_X(-2f))=h^1(\Oo_{\mathbb P^1}(-2))=1$ and we get the unique extension
$$0\to \Oo_X(-2H+cf) \to \Oo_X(-H+(c-a_1)f) \to \Oo_{C_1} \to 0.$$
 Thus there exists a uniquely determined extension of $\Oo_{C_1}$ by $\Omega_X^1 (\log D')$ and it must be $\Oo_X((e-1)f)\oplus \Oo_X(-H+a_0f)$.\\

Now assume that the assertion is true for $a\geq e+1$ to use induction. For the case of $a+1$, if $D'=\{L_1, \cdots, L_a, C_1\}$  we have the sequence
$$0\to \Omega_X^1 (\log D') \to \Omega_X^1 (\log D) \to \Oo_{L_{a+1}} \to 0.$$
Thanks to the above argument
$$\Omega_X^1 (\log D') \cong \Oo_X((a-2)f)\oplus \Oo_X(-H+a_0f),$$
Note that the dimension of $Ext^1 (\Oo_{L_{a+1}}, \Oo_X((a-2)f))$ is $h^1(\Oo_{L_{a+1}}\otimes\Oo_X(-2H+(c-a)f))=h^1(\Oo_{\mathbb P^1}(-2)=1$ and the dimension of $Ext^1 (\Oo_{L_{a+1}}, \Oo_X(-H+a_of))$ is $h^1(\Oo_{L_{a+1}}\otimes\Oo_X(-H+(a_1-2)f))=h^1(\Oo_{\mathbb P^1}(-1))=0$ and we get the unique extension
$$0\to \Oo_X((a-2)f) \to \Oo_X((a-1)f) \to \Oo_{L_{a+1}} \to 0.$$
 Thus there exists a uniquely determined extension of $\Oo_{L_{a+1}}$ by $\Omega_X^1 (\log D')$ and it must be $\Oo_X((a-1)f)\oplus \Oo_X(-H+a_0f)$.

\end{proof}

If $e>0$, $h^0(\Oo_X(H-a_1f))=1$ and we cannot consider an arrangement with more than a curve $C_j\in |\Oo_X(H-a_1f)|$. When $e=0$, $h^0(\Oo_X(H-a_1f))=2$ and a curve $C\in |\Oo_X(H-a_1f)|$ is rational of degree $a_0$.  Moreover $(H-a_1f)^2=0$ so we have a one dimensional family of disjoint lines in $|\Oo_X(f)|$ and a one dimensional family of  disjoint rational curve of degree $a_0$. For this reason we consider now in more details the case $e=0$.

\begin{theorem}\label{lineCb}
Let $e=0.$ Let $D=\{L_1, \cdots, L_a, C_1, \cdots, C_b\}$ be an arrangement of $a\geq 0$ lines and $b\geq 0$ rational curves on $X$ with $L_i \in |\Oo_X(f)|$ and $C_j\in |\Oo_X(H-a_1f)|$. Then we have
$$\Omega_X^1 (\log D) \cong \Oo_X((a-2)f)\oplus \Oo_X((b-2)H+(c-ba_1)f).$$

\end{theorem}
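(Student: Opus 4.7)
The plan is to exploit the fact that for $e = 0$ the cotangent sequence $0\to \Oo_X(-2f)\to \Omega_X^1\to \Oo_X(-2H+cf)\to 0$ splits, so that Proposition \ref{line} can be applied in two orthogonal directions. Write $D_L = \{L_1,\ldots,L_a\}$, $D_C = \{C_1,\ldots,C_b\}$, and set $A := \Oo_X((a-2)f)$ and $B' := \Oo_X((b-2)H+(c-ba_1)f) = \Oo_X((b-2)(H-a_1 f))$ (the last equality uses $c = 2a_1$). Proposition \ref{line} applied to $D_L$ yields
\[
\Omega_X^1(\log D_L) \cong A \oplus \Oo_X(-2(H-a_1 f)),
\]
and, since for $e = 0$ the dual cotangent sequence $0 \to \Oo_X(-2(H-a_1 f)) \to \Omega_X^1 \to \Oo_X(-2f) \to 0$ also splits, the entirely parallel argument applied to $D_C$ (with the roles of the two rulings swapped) gives
\[
\Omega_X^1(\log D_C) \cong \Oo_X(-2f) \oplus B'.
\]
The degenerate cases $a = 0$ or $b = 0$ reduce to the splitting of $\Omega_X^1$ itself.

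Composing the natural inclusions $\Omega_X^1(\log D_L), \Omega_X^1(\log D_C) \hookrightarrow \Omega_X^1(\log D)$ with the projections onto $A$ and $B'$ respectively produces a morphism
\[
\phi : A \oplus B' \longrightarrow \Omega_X^1(\log D)
\]
of rank-two sheaves. I would then argue that $\phi$ is an isomorphism by combining a Chern-class count with a stalkwise check. Using the residue sequence $0\to \Omega_X^1\to \Omega_X^1(\log D)\to \bigoplus_i \Oo_{L_i}\oplus \bigoplus_j \Oo_{C_j}\to 0$, together with $f^2 = (H-a_1 f)^2 = 0$ and $f\cdot(H-a_1 f) = 1$, a direct calculation gives $c_1(\Omega_X^1(\log D)) = (a-2)f + (b-2)(H-a_1 f)$ and $c_2(\Omega_X^1(\log D)) = (a-2)(b-2)$, which match $c_1(A\oplus B')$ and $c_2(A\oplus B')$.

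For the stalkwise check, fix $x \notin D$: then $\Omega_X^1(\log D)_x = \Omega_{X,x}^1 \cong \Oo_X(-2f)_x \oplus \Oo_X(-2(H-a_1 f))_x$. The inclusion $A\hookrightarrow \Omega_X^1(\log D_L)$ in Proposition \ref{line} is constructed as an iterated log-extension of $\Oo_X(-2f)\subset \Omega_X^1$, so away from $D_L$ the subsheaf $A$ coincides with the $\Oo_X(-2f)$-summand; symmetrically, $B'$ coincides off $D_C$ with the $\Oo_X(-2(H-a_1 f))$-summand. Hence $\phi$ is an isomorphism on every stalk off $D$, it is generically an isomorphism, and combined with the matching Chern classes a standard argument (the kernel is torsion-free of rank $0$, the cokernel then has trivial $c_1$ and $c_2$) forces $\phi$ to be a global isomorphism. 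The main obstacle is this last identification of $A$ (respectively $B'$) with the canonical $\Oo_X(-2f)$- (respectively $\Oo_X(-2(H-a_1 f))$-)direction inside $\Omega_X^1$, which has to be traced through the inductive construction in Proposition \ref{line}; the remaining Chern-class and sheaf-theoretic steps are routine.
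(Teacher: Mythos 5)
Your proposal is correct in substance but follows a genuinely different route from the paper. The paper proves the pure case $D=\{C_1,\dots,C_b\}$ by induction on $b$, computing $\mathrm{Ext}$-groups at each step and, crucially, invoking the regularity machinery of Section~2: it twists by $\Oo_X(2f)$, checks that $E=\Omega^1_X(\log D)\otimes\Oo_X(2f)$ is regular, and applies Corollary~\ref{cor1} to split off $\Oo_X$; it then treats the mixed case by decomposing the extension class of $\bigoplus_i\Oo_{L_i}\oplus\bigoplus_j\Oo_{C_j}$ by $\Omega^1_X$ into the two surviving $\mathrm{Ext}^1$ factors. You instead exploit the full symmetry of $X\cong\PP^1\times\PP^1$ available when $e=0$: Proposition~\ref{line} applied to each of the two rulings produces the canonical subsheaves $A=\Oo_X((a-2)f)\subset\Omega^1_X(\log D_L)$ and $B'=\Oo_X((b-2)(H-a_1f))\subset\Omega^1_X(\log D_C)$, and the induced map $\phi:A\oplus B'\to\Omega^1_X(\log D)$ is an isomorphism off $D$ because each summand restricts there to the corresponding canonical direct summand of $\Omega^1_X$ (this is indeed visible in the columns of the diagrams in the proof of Proposition~\ref{line}, as you anticipate). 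Since the first Chern classes of source and target agree, $\det\phi$ is a nonvanishing section of $\Oo_X$ and $\phi$ is a global isomorphism, so you do not actually need the $c_2$ computation, nor the regularity machinery, nor the induction on $b$. Two points should be made explicit: the map $\phi$ is obtained by composing the \emph{summand inclusions} $A\hookrightarrow\Omega^1_X(\log D_L)$ and $B'\hookrightarrow\Omega^1_X(\log D_C)$ with the natural inclusions into $\Omega^1_X(\log D)$ (composing an inclusion with a projection, as you wrote it, goes the wrong way); and the ``symmetric'' version of Proposition~\ref{line} for curves in $|\Oo_X(H-a_1f)|$ must be justified by the ruling-swapping automorphism of $\PP^1\times\PP^1$, which exists precisely because $e=0$ and because $(H-a_1f)^2=0$, $f\cdot(H-a_1f)=1$. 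With these points spelled out your argument is complete and arguably more self-contained than the paper's.
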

\begin{proof}
 Let us first consider the case of $D=\{ C_1, C_2\}$. Let $D'=\{C_1\}$. Then we have the sequence
$$0\to \Omega_X^1 (\log D') \to \Omega_X^1 (\log D) \to \Oo_{C_2} \to 0.$$
Thanks to Proposition \ref{lineC}
$$\Omega_X^1 (\log D') \cong \Oo_X(-2f)\oplus \Oo_X(-H+a_0f).$$ 
Note that the dimension of $Ext^1 (\Oo_{C_2}, \Oo_X((-2f))$ is $h^1(\Oo_{C_2}\otimes\Oo_X(-2H+cf))=h^1(\Oo_{\PP^1}(-2c+c+2a_1))=h^1(\Oo_{C\PP^1}(-c+2a_1))=0$ and the dimension of $Ext^1 (\Oo_{C_2}, \Oo_X(-H+a_0f)$ is $h^1(\Oo_{C_2}\otimes\Oo_X(-H+(c-2-a_0)f))=h^1(\Oo_{\mathbb P^1}(-c+c-2+a_1-a_0))=1$ and we get the unique extension
$$0\to \Oo_X(-H+a_1f) \to \Oo_X \to \Oo_{C_2} \to 0.$$
 Thus there exists a uniquely determined extension of $\Oo_{C_2}$ by $\Omega_X^1 (\log D')$ and it must be $\Oo_X((-2f)\oplus \Oo_X$.\\


  Now assume that the assertion is true for $(0,b)$ with $b\geq 2$ to use induction. For the case of $(0,b+1)$ $D=\{ C_1, \dots ,C_{b+1}\}$. Let $D'=\{C_1, \cdots, C_b\}$, then we have the sequence
$$0\to \Omega_X^1 (\log D') \to \Omega_X^1 (\log D) \to \Oo_{C_{b+1}} \to 0.$$
Thanks to above argument and the inductive hypothesis, 
$$\Omega_X^1 (\log D') \cong \Oo_X(-2f)\oplus \Oo_X((b-2)H+(c-ba_1)f).$$ Let us tensor the above sequence by $\Oo_X(2f)$ and we obtain
$$0\to \Oo_X\oplus \Oo_X((b-2)H+(c-ba_1+2)f)\to \Omega_X^1 (\log D)\otimes \Oo_X(2f) \to \Oo_{C_{b+1}}\otimes\Oo_X(2f) \to 0.$$
We call $E=\Omega_X^1 (\log D)\otimes \Oo_X(2f)$ and we want to show that $E$ is regular. Notice that  $\Oo_X\oplus \Oo_X((b-2)H+(c-ba_1+2)f)$ is regular. So $h^1(E(-f))=h^1(\Oo_{C_{b+1}}\otimes\Oo_X(-f))=h^1(\Oo_{\mathbb P^1}(-1))=0$. Moreover $h^1(E(-H+(c-1)f))=h^1(\Oo_{C_{b+1}}\otimes\Oo_X(-H+(c+1)f))=h^1(\Oo_{\mathbb P^1}(-c+a_1+c+1))=h^1(\Oo_{\mathbb P^1}(a_1+1))=0$ and $h^2(E(-H+(c-2)f))=h^2(\Oo_{C_{b+1}}\otimes\Oo_X(-H+cf))=0$. Thus we have that $E$ is regular and, since $h^1(\Oo_{C_{b+1}}\otimes\Oo_X(-2H+cf))=h^1(\Oo_{\mathbb P^1}(-2c+2a_1+c))=0$, we get $h^2(E(-2H+(c-2)f))=h^0(E))\geq 1$ so, by Corollary \ref{cor1}, we can conclude that $\Oo_X$ is a direct summand of $E$. Hence, if $E$ is a vector bundle, $E\cong \Oo_X\oplus\Oo_X((b-1)H+(c-(b+1)a_1+2)f)$ and $$\Omega_X^1 (\log D) \cong \Oo_X(-2f)\oplus \Oo_X((b-1)H+(c-(b+1)a_1)f).$$ 

Finally  let us deal with the case when $a$ and $b$ is at least $1$ and $b$ at least $2$. The logarithmic bundle $\Omega_X^1 (\log D)$ is an extension of $(\oplus \Oo_{L_i})\oplus (\oplus \Oo_{C_j})$ by $\Oo_X(-2f)\oplus \Oo_X(-H+a_0f)$. Note that we have
$$Ext^1 (\oplus \Oo_{L_i}, \Oo_X(-H+a_0f))=0$$ by Proposition \ref{lineC} and $$Ext^1 (\oplus \Oo_{C_j}, \Oo_X(-2f))=0$$
by the above argument.\\
Thus $\Omega_X^1 (\log D)$ corresponds to an element $\epsilon$;
$$\epsilon \in Ext^1 (\oplus \Oo_{L_i}, \Oo_X(-2f))\oplus Ext^1 (\oplus \Oo_{C_j}, \Oo_X(-H+a_0f)).$$
From the argument in Proposition \ref{lineC}, we observe that the first factor of $\epsilon$ with $Ext^1(\oplus \Oo_{L_i}, \Oo_X(-H+a_0f))=0$ generates $\Oo_X((a-2)f)\oplus \Oo_X(-H+a_0f)$ and similarly,  by the argument for the case $(0,b)$ the second factor generates $\Oo_X(-2f)\oplus \Oo_X((b-2)H+(c-ba_1)f)$.
Thus $\epsilon$ corresponds to the bundle $\Oo_X((a-2)f)\oplus \Oo_X((b-2)H+(c-ba_1)f)$.

\end{proof}
\begin{remark}
    When $c=2$ the above Theorem coincides with \cite{bhm} Proposition 6.3.
\end{remark}
We are able finally to classify regular ACM logarithmic bundles:

\begin{corollary}
Let $e=0$ and $c>2$. Let $D$ be an arrangement of smooth curves on $X$ with simple normal crossings. If $\Omega_X^1 (\log D)$ is a regular ACM bundle, then $D$ consists of $a$ lines in $|\Oo_X(f)|$ with $2\leq a \leq c+1$ and $2$ rational curves in $|\Oo_X(H-a_1f)|$. In particular we have that $\Omega_X^1 (\log D)$ has always regularity $0$:
$$\Omega_X^1 (\log D) \cong \Oo_X((a-2)f)\oplus \Oo_X.$$ 
\end{corollary}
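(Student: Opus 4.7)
The plan is to invoke Theorem \ref{lineCb} to write $\Omega^1_X(\log D)$ as a sum of two explicit line bundles, and then extract numerical constraints from the regularity and ACM hypotheses. Since $e=0$ gives $a_1 = a_0$ and $c = 2a_0$, Theorem \ref{lineCb} becomes
$$\Omega_X^1 (\log D) \cong \Oo_X((a-2)f)\oplus \Oo_X((b-2)H-(b-2)a_0 f).$$
Both the regularity of Definition \ref{d1} and the ACM property commute with finite direct sums (the relevant $h^1$ and $h^2$ groups split), so it suffices to impose each condition on each summand separately.

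For regularity I would apply Remark \ref{lb}: $\Oo_X(\alpha H - \gamma f)$ is regular iff $\alpha \geq 0$ and $\gamma \leq \alpha a_0$. For $\Oo_X((a-2)f)$ this forces $a \geq 2$; for the second summand it gives $b \geq 2$ (the inequality $(b-2)a_0 \leq (b-2)a_0$ being trivial). So regularity of the whole bundle forces $a \geq 2$ and $b \geq 2$.

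The core step is then to characterize when a line bundle $\Oo_X(\alpha H + \beta f)$ on $X$ with $e = 0$ is ACM. Using Lemma \ref{lem} together with $\mathrm{Sym}^k \Ee \cong \Oo_{\PP^1}(ka_0)^{k+1}$, I would show that $H^1(X, \Oo_X((\alpha+t)H + \beta f))$ vanishes for every integer $t$ iff $-1 \leq \beta \leq c-1$: the case $\alpha + t \geq 0$ (part (i) of Lemma \ref{lem}) is worst at $\alpha+t = 0$ and demands $\beta \geq -1$, while the case $\alpha + t \leq -2$ (part (iii)) is worst at $\alpha+t = -2$ and demands $\beta \leq c-1$. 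Applied to the summands, $\Oo_X((a-2)f)$ is ACM iff $1 \leq a \leq c+1$, and $\Oo_X((b-2)H-(b-2)a_0 f)$ is ACM iff $-1 \leq -(b-2)a_0 \leq 2a_0-1$. Since $c>2$ forces $a_0 \geq 2$, the lower bound rearranges to $(b-2)a_0 \leq 1$, hence $b \leq 2$. Combined with the regularity constraints, we obtain $2 \leq a \leq c+1$ and $b = 2$.

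With $b = 2$ the second summand collapses to $\Oo_X$, giving the asserted $\Omega^1_X(\log D) \cong \Oo_X((a-2)f) \oplus \Oo_X$; this is $0$-regular but not $(-1)$-regular (the twist $\Oo_X(-H)$ of $\Oo_X$ already violates Remark \ref{lb}), so the regularity is exactly $0$. The main obstacle is this clean characterization of ACM line bundles via Lemma \ref{lem}; everything else is numerical bookkeeping.
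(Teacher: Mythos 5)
Your reduction of the numerical part is essentially the paper's: both arguments end by combining the regularity criterion of Remark \ref{lb} with the fact that $\Oo_X(sH+tf)$ is ACM iff $-1\leq t\leq c-1$, applied to the two summands $\Oo_X((a-2)f)$ and $\Oo_X((b-2)H-(b-2)a_0f)$, to force $2\leq a\leq c+1$ and $b=2$. That bookkeeping is correct.

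However, there is a genuine gap at your very first step. You invoke Theorem \ref{lineCb}, whose hypothesis is that $D$ \emph{already} consists of lines in $|\Oo_X(f)|$ and rational curves in $|\Oo_X(H-a_1f)|$. The corollary assumes only that $D$ is an arbitrary arrangement of smooth curves with simple normal crossings; the claim that every component lies in one of those two pencils is part of the \emph{conclusion}, and it is the substantive half of the proof, which your argument omits entirely. The paper establishes it from the residue sequence, which for $e=0$ (split cotangent bundle) reads
$$0\to \Oo_X(-2f)\oplus \Oo_X(-2H+cf) \to \Omega_X^1 (\log D) \to \oplus_{i=1}^m \Oo_{D_i} \to 0.$$
Twisting by $\Oo_X(-f)$ and using $h^1(E(-f))=0$ (regularity) together with the vanishing of $h^2$ of the twisted subbundle forces $h^1(\Oo_{D_i}\otimes\Oo_X(-f))=0$, hence $s_i\in\{0,1\}$ for $D_i\in|\Oo_X(s_iH+t_if)|$; then twisting by $\Oo_X(-H+(a_1-1)f)$ and using $h^1(E(-H+(a_1-1)f))=0$ --- which follows from Lemma \ref{l5b} because a regular sheaf is globally generated (Remark \ref{gg}) and $E$ is assumed ACM --- forces $t_i=1$ when $s_i=0$ and $t_i=-a_1$ when $s_i=1$. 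Only after this step may Theorem \ref{lineCb} be applied. Without it your proof establishes the numerical constraints on $a$ and $b$ but not the classification of the components of $D$, so it proves strictly less than the statement.
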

\begin{proof}
If $D:=\{D_1, \cdots, D_m\}$ consists of $m$ smooth curves, then it admits the sequence
\begin{equation}\label{seq4}
0\to \Oo_X(-2f)\oplus \Oo_X(-2H+cf) \to \Omega_X^1 (\log D) \to \oplus_{i=1}^m \Oo_{D_i} \to 0.
\end{equation}
If $E=\Omega_X^1(\log D)$ is regular, then  we have $h^1(E(-f))=0$. From the sequence (\ref{seq4}) twisted by $\Oo_X(-f)$ and the fact that $h^2(\Oo_X(-3f))=h^2(\Oo_X(-2H+(c-1)f))=0$, we deduce that $h^1(\Oo_{D_i}\otimes \Oo_X(-f))=0$ for any $i=1,\dots m$. Let $D_i\in |\Oo_X(s_iH+t_if)|$ with $s_i\geq 0$ and $t_i\geq -a_1$, we get $h^1(\Oo_{D_i}\otimes \Oo_X(-f))=h^1(\Oo_{\mathbb P^1}(-s_i))=0$, so $s_i=0$ or $s_i=1$.\\ 
Moreover by Lemma \ref{l5b} $H^1(E((a-1)H+bf))=H^1(E(aH+(b-1)f))=0$ for any $a,b\geq 0$, in particular we have $h^1(E(-H+(a_1-1)f))=0$. From the sequence (\ref{seq4}) twisted by $\Oo_X(-H+(a_1-1)f)$ and the fact that $h^2(\Oo_X(-H+(a_1-3)f))=0$ and $h^2(\Oo_X(-3H+(c+a_1-1)f))=h^0(\Oo_X((H+(-1-a_1)f))=0$, we deduce that $h^1(\Oo_{D_i}\otimes \Oo_X(-H+(a_1-1)f))=0$ for any $i=1,\dots m$. We get $h^1(\Oo_{D_i}\otimes \Oo_X(-H+(a_1-1)f))=h^1(\Oo_{\mathbb P^1}(-cs_i+(a_1-1)s_i-t_i)=0$. If $s_i=0$ we obtain $t_i=1$ so $D_i\in |\Oo_X(f)|$. If $s_i=1$ we must have $-c+a_1-1-t_i\geq -1$, hence $t_i\leq -a_0$. Since $a_0=a_1$ and $t_i\geq -a_1$ we may conclude that $t_i=-a_1$. So we have only two cases: $D_i\in|\Oo_X(f)|$ or $D_i\in|\Oo_X(H-a_1f)|$. By Proposition \ref{lineCb} $$\Omega_X^1 (\log D) \cong \Oo_X((a-2)f)\oplus \Oo_X((b-2)H+(c-ba_1)f).$$ We recall that $\Oo_X(sH+tf)$ si ACM if and only if $-1\leq t\leq c-1$ so we mast have $2\leq a \leq c+1$ and (since $c>2$) $b=2$.
\end{proof}


\bibliographystyle{amsplain}

\begin{thebibliography}{99}

\bibitem{am} {\sc E. Arrondo, F. Malaspina,} \emph{Cohomological Characterization of Vector Bundles on Grassmannians of Lines}, J. Algebra 323 (2010), 1098-1106.
\bibitem{bm2} {\sc E. Ballico, F. Malaspina,} \emph{Regularity and Cohomological Splitting Conditions for Vector Bundles on Multiprojectives Spaces},  J. of Algebra 345, (2011), 137-149.
\bibitem{bm3} {\sc E. Ballico and F. Malaspina,} \emph{ Q-regularity and an Extension of Evans-Griffiths Criterion to Vector Bundles on Quadrics}, J. Pure Appl. Algebra 213 (2009), 194-202.

\bibitem{bhm} {\sc E. Ballico, S. Huh, F. Malaspina,} \emph{ A Torelli-Type Problem for Logarithmic Bundles over Projective Varieties}, Quarterly J. of Mathematics (2015), 417-436.

\bibitem{bc} {\sc N. Botbol and M. Chardin,} \emph{Castelnuovo Mumford regularity with respect to multigraded ideals}, J. Algebra 474 (2017), 361–392. 
\bibitem{be} {\sc M. K. Brown and D. Erman}, Tate resolutions on toric varieties, to appear in the Journal of the European Mathematical Society (JEMS)
\bibitem{CH}
 {\sc M. Casanellas, R. Hartshorne, F. Geiss, F.O. Schreyer} \emph{Stable Ulrich bundles}. Int. J. of Math. \text{23}, (2012), 1250083.

\bibitem{cm2}{\sc L. Costa and R. M. Mir\'{o}-Roig,} \emph{$m$-blocks collections
and Castelnuovo-Mumford regularity in multiprojective spaces,} Nagoya Math. J. 186 (2007), 119-155.
\bibitem{D} {\sc Deligne Pierre}. \'{E}quations diff\'erentielles \`a points singuliers r\'eguliers,Lecture Notes in Mathematics, Vol. 163, Springer-Verlag,(1970).
\bibitem{EH}
{\sc D.~Eisenbud, J.~Harris},
\emph{On varieties of minimal degree (a centennial account)}. Algebraic geometry, Bowdoin, 1985 (Brunswick, Maine, 1985). Proceedings of Symposia in Pure Mathematics \textbf{46} (1987), no.~1, 3-13.
\bibitem{e}{\sc E.G. Evans, P. Griffiths},
\emph{The syzygy problem}, Ann. of Math. 114(2) (1981), 323-333.
\bibitem{EH2} {\sc D. Eisenbud, J. Herzog}: \emph{The classification of homogeneous
Cohen-Macaulay rings of finite representation type}. Math. Ann. \textbf{280}
\rm(1988), 347--352.
\bibitem{FM}
{\sc D.~Faenzi and F.~Malaspina}, \emph{Surfaces of minimal degree of tame representation type and mutations of Cohen-Macaulay modules},  Adv. Math. \textbf{310} (2017), 663--695.
\bibitem{FuMa} {\sc M. Fulger and M. Marchitan} \emph{Some splitting criteria on Hirzebruch surfaces}, Bull. math. de la Société des Sciences Mathématiques de Roumanie, \textbf{4}, (2011) 313--323, http://www.jstor.org/stable/43679229.




\bibitem{ha2}
  R. Hartshorne: {\em Algebraic geometry}. G.T.M. 52, Springer \rm (1977).

\bibitem{hw} {\sc J. W. Hoffman and H. H. Wang,} \emph{Castelnuovo-Mumford
regularity in biprojective spaces,} Adv. Geom. 4 (2004), no. 4, 513-536.
\bibitem
{Ho} {\sc G. Horrocks}, \emph{Vector bundles on the punctured spectrum of a ring}, Proc. London Math. Soc.
(3) 14 (1964), 689-713.
\bibitem
{Kn} {\sc H. Kn\"{o}rrer}, \emph{Cohen-Macaulay modules of hypersurface singularities I}, Invent. Math.
88 (1987), 153-164.
\bibitem{LA}{\sc R. Lazarsfeld}
        \emph{Positivity in Algebraic Geometry, Vol. 2}, Springer, Berlin, 2004.
\bibitem{le}{\sc J. Le Potier}\emph{ Annulation de la
cohomologie \' {a} valeurs dans
un fibr\'{e}s vectoriel holomorphe positif de rang quelconque}, Math.
 Ann.
218 , no. 1 (1975), 35-53.


\bibitem{ms} {\sc D.~Maclagan and G.~Smith}, \emph{Multigraded
  Castelnuovo-Mumford regularity}, J. Reine Angew. Math. \textbf{571}
  (2004), 179--212.

\bibitem{m} {\sc D.~Mumford}, \emph{Lectures on curves on an algebraic
  surface}, Annals of Mathematics Studies \textbf{59}. Princeton
  University Press, Princeton, N.J., 1966.
\bibitem{PP} {\sc G.~Pareschi and M.~Popa}, \emph{Regularity on
  abelian varieties I}, J. Amer. Math. Soc. \textbf{16} (2003),
  285--302.
  
\bibitem{S}
{\sc K. Saito}, {\em Theory of logarithmic differential forms and logarithmic
vector fields}, J. Fac. Sci. Univ. Tokyo {\bf 27} (1980), 265–291.


\end{thebibliography}

\end{document}